\newtheorem{theorem}{Theorem}[section]
\newtheorem{corollary}[theorem]{Corollary}
\newtheorem{lemma}[theorem]{Lemma}
\newtheorem{proposition}[theorem]{Proposition}
\theoremstyle{definition}
\newtheorem{definition}[theorem]{Definition}
\theoremstyle{remark}
\newtheorem{remark}[theorem]{Remark}
\newcommand{\id}{\textrm{id}}
\newcommand{\grad}{\textrm{grad}}
\title{On 3-dimensional almost Einstein manifolds with circulant structures}
\author{Iva Dokuzova}
\address{Iva Dokuzova\\Department of Algebra and Geometry\\
Plovdiv University Paisii Hilendarski\\ 24 Tzar Asen, 4000 Plovdiv, Bulgaria}
\email{dokuzova@uni-plovdiv.bg}
\begin{document}

\begin{abstract}
 A 3-dimensional Riemannian manifold equipped  with a tensor structure of type $(1,1)$, whose third power is the identity, is considered. This structure and the metric have circulant matrices with respect to some basis, i.e., these structures are circulant. An associated manifold, whose metric is expressed by both structures, is studied. Three classes of such manifolds are considered. Two of them are determined by special properties of the curvature tensor of the manifold. The third class is composed by manifolds whose structure is parallel with respect to the Levi-Civita connection of the metric. Some geometric characteristics of these manifolds are obtained. Examples of such manifolds are given.
\end{abstract}

\subjclass[2010]{53B20, 53B30, 53C15, 53C25, 22E60}
\keywords{Riemannian manifold, indefinite metric, Einstein manifold, Ricci curvature, Lie group}

\maketitle

\section{Introduction}
\label{Sec:1}
Significant results in the geometry of Riemannian manifolds with additional structures are related to the curvature tensor, the Ricci tensor, the scalar curvatures, the Ricci curvature and the sectional curvatures of some characteristic 2-planes of every tangent space of the manifolds.
  We will mention the following papers on this topic. Some of them refer to the theory of Riemannian almost product manifolds (\cite{griba-mek2, Mek2, S-G}), and others refer to the theory of almost Hermitian manifolds (\cite{Sca-Vezz, prvn, van, yu}). There are studied classes of manifolds, whose curvature tensors are invariant under the additional structure, with interesting geometrical characteristics. A.~Naveira made a classification of Riemannian almost product manifolds by the properties of the tensor $\nabla P$, where $\nabla$ is the Levi-Civita connection determined by the metric, and $P$ is the almost product structure (\cite{Nav}). The class $W_{0}$ defined by $\nabla P=0$ in this classification is common to all classes. Every manifold in $W_{0}$ has curvature tensor which is invariant under $P$. In this way, almost Hermitian manifolds were classified by A.~Gray and L.~Hervella (\cite{GrHer}). In \cite{gray}, there are introduced three classes determined by Gray's curvature identities and it is proved that every K\"{a}hler manifold satisfies them. Due to A.~Gray, in these classes curvature identities are a key to understand their geometry.

  We consider a 3-dimensional Riemannian manifold $(M, g, Q)$. Here $g$ is the metric and $Q$ is a tensor field of type $(1,1)$, such that $Q^{3}=\id$, $Q\neq\id$. The local coordinates of $Q$ form a circulant matrix and $Q$ is compatible with $g$, such that an isometry is induced in any tangent space of $M$. Also, we consider an associated manifold $(M,\tilde{g}, Q)$ whose metric $\tilde{g}$ is expressed by $g$ and $Q$, and $\tilde{g}$ is necessarily indefinite. We study two classes $\mathcal{L}_{2}$ and $\mathcal{L}_{1}$ of manifolds whose curvature tensors are invariant under $Q$. The class $\mathcal{L}_{0}$, composed by manifolds whose structure $Q$ is parallel with respect to the Levi-Civita connection of the metric, is their subclass. Our purpose is to obtain some geometric properties of $(M, \tilde{g}, Q)$ and relations between curvature quantities of $(M, g, Q)$ and $(M, \tilde{g}, Q)$, when these manifolds belong to $\mathcal{L}_{2}$, $\mathcal{L}_{1}$, or $\mathcal{L}_{0}$.

The paper is organized as follows. In Section \ref{Sec:2}, we recall some basic facts about $(M, g, Q)$ and $(M, \tilde{g}, Q)$ known from \cite{Raz, DRDok, AE-dokuzova}. In Section \ref{Sec:3}, we obtain conditions for $(M, g, Q)$ which are necessary and sufficient for belonging of $(M, \tilde{g}, Q)$ to each of the classes $\mathcal{L}_{2}$, $\mathcal{L}_{1}$ and $\mathcal{L}_{0}$. In both classes $\mathcal{L}_{2}$ and $\mathcal{L}_{1}$, we express the Ricci tensor of $(M, \tilde{g}, Q)$ by the metrics $g$ and $\tilde{g}$, and establish that $(M, \tilde{g}, Q)$ is an almost Einstein manifold. Also, we get a condition under which the manifold $(M, \tilde{g}, Q)$ is Einstein. In Section \ref{Sec:4}, we obtain the sectional curvatures of some characteristic 2-planes of $(M, \tilde{g}, Q)$. For an Einstein manifold, we find the Ricci curvature in the direction of a non-isotropic vector, as well as in the direction of an isotropic vector. In Section \ref{Sec:5}, we characterize geometrically examples of the considered manifolds on 3-dimensional real Lie groups, which are constructed in \cite{fil-dokuzova}.

\section{Preliminaries}
\label{Sec:2}
We continue our investigations on manifolds $(M, g, Q)$ and $(M, \tilde{g}, Q)$ studied in \cite{Raz, DRDok, AE-dokuzova}. These manifolds are determined in the following way.

Let $M$ be a $3$-dimensional differentiable manifold equipped with a Riemannian metric $g$.
Let $Q$ be a tensor field on $M$ of type $(1,1)$ whose coordinate matrix, with respect to some basis $\{e_{1}, e_{2}, e_{3}\}$ of the tangent space $T_{p}M$, $p\in M$, is a circulant one:
\begin{equation}\label{f4}
    (Q_{i}^{j})=\begin{pmatrix}
      0 & 1 & 0 \\
      0 & 0 & 1 \\
      1 & 0 & 0 \\
    \end{pmatrix}.
\end{equation}
Obviously
\begin{equation}\label{Q3}
    Q^{3}= \id,\qquad Q\neq \id.
\end{equation}
Let the structure $Q$ be compatible with $g$ such that
\begin{equation}\label{2.12}
     g(Qx, Qy)=g(x,y).
\end{equation}
 Here and anywhere in this work, $x, y, z, u$ will stand for arbitrary elements of the algebra on the smooth vector fields on $M$ or vectors in $T_{p}M$. The Einstein summation convention is used, the range of the summation indices being always $\{1, 2, 3\}$.

The equalities \eqref{Q3} and \eqref{2.12} imply that the matrix of $g$ has the form
\begin{equation}\label{f2}
    (g_{ij})=\begin{pmatrix}
      A & B & B \\
      B & A & B \\
      B & B & A \\
    \end{pmatrix},
\end{equation}
 where $A$ and $B$ are smooth functions on $M$. We suppose $A>B>0$ in order that the metric $g$ is positive definite.

The associated metric $\tilde{g}$ on $(M, g, Q)$ is determined by \begin{equation}\label{metricf}
 \tilde{g}(x, y)=g(x, Qy)+g(Qx, y).\end{equation}
 It is an indefinite metric whose component matrix has the form
\begin{equation}\label{f21}
(\tilde{g}_{ij})=\begin{pmatrix}
      2B & A+B & A+B \\
      A+B & 2B & A+B \\
      A+B & A+B & 2B \\
    \end{pmatrix}.
\end{equation}
Further, we will say that $(M, \tilde{g}, Q)$ is associated with $(M, g, Q)$.

\begin{definition}
A basis of type $\{x, Qx, Q^{2}x\}$ of $T_{p}M$ is called a $Q$-\textit{basis}. In this case we say that \textit{the vector $x$ induces a $Q$-basis of} $T_{p}M$.
\end{definition}

In \cite{Raz}, for $(M, g, Q)$ it is verified that:
\begin{itemize}
  \item [(i)] if a vector $x$ induces a $Q$-basis of $T_{p}M$ and $\varphi$ is the angle between $x$ and $Qx$ with respect to $g$, then
   \begin{equation}\label{varphi}
   \varphi\in \big(0,\dfrac{2\pi}{3}\big),\qquad \angle(x, Qx)=\angle(Qx, Q^{2}x)=\angle(x, Q^{2}x)=\varphi;
   \end{equation}
    \item [(ii)] an orthogonal $Q$-basis of $T_{p}M$ exists.
\end{itemize}

The Levi-Civita connection on a Riemannian manifold is denoted by $\nabla$. The curvature tensor $R$ of $\nabla$ is defined by \begin{equation}\label{R-def}R(x, y)z=\nabla_{x}\nabla_{y}z-\nabla_{y}\nabla_{x}z-\nabla_{[x,y]}z.\end{equation}
Also, we consider the tensor of type $(0, 4)$ associated with $R$, defined as follows
\begin{equation*}
    R(x, y, z, u)=g(R(x, y)z,u).
\end{equation*}

 A manifold $(M, g, Q)$ is in class $\mathcal{L}_{0}$ if the structure $Q$ is parallel with respect to $g$, i.e.
 \begin{equation*}
 \nabla Q=0.\end{equation*}

 A  manifold $(M, g, Q)$ is in class $\mathcal{L}_{1}$ if
\begin{equation}\label{V1}
  R(x, y, Qz, Qu)=R(x, y, z, u).
\end{equation}

A  manifold $(M, g, Q)$ is in class $\mathcal{L}_{2}$ if
\begin{equation}\label{V2}
  R(Qx, Qy, Qz, Qu)=R(x, y, z, u).
\end{equation}
The subsets $\mathcal{L}_{0}\subset\mathcal{L}_{1}\subset\mathcal{L}_{2}$ are valid (\cite{AE-dokuzova}).

Let $R_{ijkh}$ be the components of the curvature tensor $R$ of type $(0, 4)$. The following statements are presented in \cite{AE-dokuzova}. \begin{proposition}\label{ae-dok}
The property \eqref{V1} of the manifold $(M, g, Q)$ is equivalent to the conditions \begin{equation}\label{r1=r2}
    R_{1212}=R_{1313}=R_{2323}=- R_{1213}=-R_{1323}=R_{1223}.
\end{equation}
\end{proposition}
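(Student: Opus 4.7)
The plan is to reduce \eqref{V1} to a finite check on basis vectors and then read off the resulting component identities. Since $(x,y,z,u)\mapsto R(x,y,Qz,Qu)-R(x,y,z,u)$ is a tensor, \eqref{V1} is equivalent to $R(e_i,e_j,Qe_k,Qe_l)=R(e_i,e_j,e_k,e_l)$ for all choices of basis indices. From the matrix \eqref{f4} one reads off that $Q$ acts as the cyclic permutation $Qe_1=e_2$, $Qe_2=e_3$, $Qe_3=e_1$, so the identity is automatic when $k=l$. For the three ordered pairs $(k,l)=(1,2),(1,3),(2,3)$, applying $Q$ and using the antisymmetry of $R$ in the last pair reduces the problem to
\[
R(x,y,e_1,e_2)=R(x,y,e_2,e_3)=-R(x,y,e_1,e_3)
\]
for all $x,y$, which on the basis becomes $R_{ij12}=R_{ij23}=-R_{ij13}$ for every pair $(i,j)$.

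Next I would exploit the standard symmetries of $R$ (antisymmetry in each pair and pair-swap $R_{ijkl}=R_{klij}$) to cut down to the six independent components in dimension three, namely $R_{1212}$, $R_{1313}$, $R_{2323}$, $R_{1213}$, $R_{1223}$, $R_{1323}$. Specialising the above identity to each of the three nontrivial choices $(i,j)\in\{(1,2),(1,3),(2,3)\}$ yields a pair of equalities between some of these six components; for instance $(i,j)=(1,3)$, together with $R_{1312}=R_{1213}$, gives $R_{1213}=R_{1323}=-R_{1313}$, and similarly for the other two choices.

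Finally, merging the three pairs of equalities obtained in this way, one sees that they collapse to the single chain
\[
R_{1212}=R_{1313}=R_{2323}=-R_{1213}=-R_{1323}=R_{1223},
\]
which is exactly \eqref{r1=r2}. Every step above is an equivalence, so the converse is automatic: starting from \eqref{r1=r2} one recovers $R_{ij12}=R_{ij23}=-R_{ij13}$ for all $(i,j)$, and hence the basis form of \eqref{V1}. The whole argument is elementary, and the only real hazard is bookkeeping — tracking the signs introduced by antisymmetry and the reorderings coming from pair-swap so that no component is duplicated or forgotten.
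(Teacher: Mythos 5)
Your argument is correct. Note that the paper itself gives no proof of this proposition---it is quoted from \cite{AE-dokuzova}---so there is no in-text argument to compare against; your direct verification is the natural one and the bookkeeping checks out. Concretely, with $Qe_1=e_2$, $Qe_2=e_3$, $Qe_3=e_1$, condition \eqref{V1} evaluated on the basis is exactly $R_{ij12}=R_{ij23}=-R_{ij13}$ for all $(i,j)$; specialising to $(i,j)=(1,2),(1,3),(2,3)$ and using $R_{ijkl}=R_{klij}$ yields the three chains $R_{1212}=R_{1223}=-R_{1213}$, $R_{1213}=R_{1323}=-R_{1313}$ and $R_{1223}=R_{2323}=-R_{1323}$, which merge consistently into the single chain \eqref{r1=r2}, and every step is an equivalence.
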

\begin{proposition}\label{1}
  The property \eqref{V2} of the manifold $(M, g, Q)$ is equivalent to the conditions
  \begin{equation}\label{r1-r6}
    R_{1212}=R_{1313}=R_{2323},\quad R_{1213}=R_{1323}=-R_{1223}.
\end{equation}
\end{proposition}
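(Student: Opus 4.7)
The plan is to exploit the fact that the matrix \eqref{f4} makes $Q$ act as a cyclic permutation on the chosen basis, namely $Q e_{1}=e_{2}$, $Q e_{2}=e_{3}$, $Q e_{3}=e_{1}$. Since \eqref{V2} is a tensor identity, it is equivalent to its evaluation on all quadruples of basis vectors. Substituting $x=e_{i}$, $y=e_{j}$, $z=e_{k}$, $u=e_{h}$, the condition \eqref{V2} becomes
\begin{equation*}
R_{\sigma(i)\sigma(j)\sigma(k)\sigma(h)} = R_{ijkh},
\end{equation*}
where $\sigma$ is the $3$-cycle $1\mapsto 2\mapsto 3\mapsto 1$. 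Hence \eqref{V2} is equivalent to $\sigma$-invariance of the components of $R$ in this basis.

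First I would enumerate the orbits of $\sigma$ acting on the nonzero essentially distinct components of $R$. In dimension three, the usual curvature symmetries $R_{ijkh}=-R_{jikh}=-R_{ijhk}=R_{khij}$ reduce every component to one of two shapes: the diagonal type $R_{ijij}$ with $i\neq j$, and the mixed type $R_{ijik}$ with $i,j,k$ pairwise distinct. Applying $\sigma$ to $R_{1212}$ gives successively $R_{2323}$ and $R_{3131}=R_{1313}$, yielding the first triple of equalities in \eqref{r1-r6}.

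Next I would apply $\sigma$ to $R_{1213}$. One step gives $R_{2321}$ and two steps give $R_{3132}$. Using only the antisymmetry within each pair and the swap of the two pairs, I would simplify $R_{2321}=-R_{1223}$ and $R_{3132}=R_{1323}$. Equating $R_{1213}=R_{2321}=R_{3132}$ then yields $R_{1213}=R_{1323}=-R_{1223}$, which is the second set of relations in \eqref{r1-r6}. For the converse direction, I would note that the six relations \eqref{r1-r6} together with the standard curvature symmetries determine $R_{ijkh}$ completely on the basis and that, by construction, they make the components $\sigma$-invariant; then tensoriality of \eqref{V2} (multilinearity of both sides in $x,y,z,u$) extends the identity to arbitrary vectors.

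I do not expect any serious obstacle. The only slightly delicate point is the bookkeeping of signs when applying the symmetries of $R$ to move indices inside the orbits $\{R_{1213},R_{2321},R_{3132}\}$; once one sees that the two pairs of indices in, say, $R_{2321}$ can be rearranged as $-R_{1223}$, the rest of the proof is bookkeeping. In particular, the same strategy, but with $\sigma$ acting only on the last two arguments of $R$, would give Proposition \ref{ae-dok}, which provides a useful consistency check on the sign conventions.
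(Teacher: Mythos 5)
Your argument is correct and complete: since $Qe_1=e_2$, $Qe_2=e_3$, $Qe_3=e_1$, condition \eqref{V2} on basis vectors is exactly $\sigma$-invariance of the components for the $3$-cycle $\sigma=(123)$, the two orbits $\{R_{1212},R_{2323},R_{1313}\}$ and $\{R_{1213},R_{2321}=-R_{1223},R_{3132}=R_{1323}\}$ give precisely \eqref{r1-r6}, and multilinearity handles the converse. Note that the paper itself offers no proof here (it cites \cite{AE-dokuzova} for both Propositions \ref{ae-dok} and \ref{1}), so there is nothing to diverge from; your direct verification, including the sign bookkeeping, checks out.
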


 The Ricci tensor $\rho$ and the scalar curvatures $\tau$ and $\tau^{*}$, with respect to $g$, are given by the well-known formulas:
\begin{equation}\label{def-rho}
    \rho(y,z)=g^{ij}R(e_{i}, y, z, e_{j}),\quad
    \tau=g^{ij}\rho(e_{i}, e_{j})\quad \tau^{*}=\tilde{g}^{ij}\rho(e_{i}, e_{j}).
\end{equation}
In \eqref{def-rho} we denote by $g^{ij}$ and $\tilde{g}^{ij}$ the components of the inverse matrices of $(g_{ij})$ and $(\tilde{g}_{ij})$, respectively.

A Riemannian manifold is said to be Einstein if its Ricci tensor $\rho$ is a constant multiple of the metric tensor $g$, i.e.
\begin{equation}\label{E}\rho(x, y) = \alpha g(x, y).\end{equation}

In \cite{Yano}, for locally decomposable Riemannian manifolds is defined a class of almost Einstein manifolds.
For the considered in our paper manifolds, we suggest the following
\begin{definition}\label{defAE}
A Riemannian manifold $(M, g, Q)$ is called
almost Einstein if the metrics $g$ and $\tilde{g}$ satisfy
\begin{equation*}
\rho(x, y) = \alpha g(x, y) + \beta \tilde{g}(x, y),\end{equation*} where $\alpha$ and $\beta$ are smooth functions on $M$.
\end{definition}

\section{Almost Einstein manifolds}\label{Sec:3}

We consider a manifold $(M, g, Q)$ and the associated manifold $(M, \tilde{g}, Q)$.

Let $\tilde{\nabla}$
be the Levi-Civita connection of $\tilde{g}$ and $\tilde{R}$
be the curvature tensor of $\tilde{\nabla}$. The Ricci tensor $\tilde{\rho}$ and the scalar curvatures $\tilde{\tau}$ and $\tilde{\tau}^{*}$, with respect to $\tilde{g}$, are
\begin{equation}\label{def-rho2}
    \tilde{\rho}(y,z)=\tilde{g}^{ij}\tilde{R}(e_{i}, y, z, e_{j}),\quad
    \tilde{\tau}=\tilde{g}^{ij}\tilde{\rho}(e_{i}, e_{j}),\quad \tilde{\tau}^{*}=g^{ij}\tilde{\rho}(e_{i}, e_{j}).
\end{equation}
In \cite{fil-dokuzova}, for $(M, g, Q)$ and $(M, \tilde{g}, Q)$, it is established the following
\begin{theorem}\label{connR-R}
 For the Ricci tensors  $\rho$ and $\tilde{\rho}$ and for the scalar curvatures $\tau$, $\tau^{*}$, $\tilde{\tau}$ and $\tilde{\tau}^{*}$ the following relation is valid:
 \begin{equation}\label{con-AE}
     \tilde{\rho}(x,y) = \rho(x,y)+\frac{1}{3}(\tilde{\tau}^{*}-\tau)g(x,y)+\frac{1}{6}(2\tilde{\tau}-2\tau^{*}+\tilde{\tau}^{*}-\tau)\tilde{g}(x,y).
\end{equation}
\end{theorem}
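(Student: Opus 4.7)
The proof splits naturally into (a) a trace argument that fixes the two scalar coefficients, and (b) a connection-and-curvature computation showing that the difference $\tilde{\rho}-\rho$ actually lies in the two-dimensional span of $g$ and $\tilde{g}$.

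For (a), the starting point is the pair of trace identities
\begin{equation*}
g^{ij}\tilde{g}_{ij}=0,\qquad \tilde{g}^{ij}g_{ij}=-\tfrac{3}{2}.
\end{equation*}
Each follows directly from the explicit matrix forms \eqref{f2} and \eqref{f21} by inversion and contraction. Invariantly, writing $\tilde{g}(x,y)=g(x,(Q+Q^{2})y)$ and setting $P=Q+Q^{2}$, the relation $Q^{3}=\id$ forces $P^{2}=P+2\,\id$, so $P$ has eigenvalues $2$ and $-1,-1$; hence $\tr P=0$ and $\tr P^{-1}=-\tfrac{3}{2}$. Assuming the ansatz
\begin{equation*}
\tilde{\rho}(x,y)=\rho(x,y)+a\,g(x,y)+b\,\tilde{g}(x,y)
\end{equation*}
with smooth $a,b$, contracting with $g^{ij}$ gives $\tilde{\tau}^{*}=\tau+3a$, so $a=\tfrac{1}{3}(\tilde{\tau}^{*}-\tau)$. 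Contracting with $\tilde{g}^{ij}$ gives $\tilde{\tau}=\tau^{*}-\tfrac{3}{2}a+3b$; eliminating $a$ yields $b=\tfrac{1}{6}(2\tilde{\tau}-2\tau^{*}+\tilde{\tau}^{*}-\tau)$. These are precisely the coefficients appearing in \eqref{con-AE}.

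For (b), the idea is to write $\tilde{\nabla}=\nabla+S$ for the difference tensor $S$ of type $(1,2)$ determined by Koszul's formula applied to $\tilde{g}$. Because $\tilde{g}$ is built from $g$ and $Q$ via \eqref{metricf} and $\nabla g=0$, the tensor $S$ is expressible purely in terms of $\nabla Q$. Substituting $\tilde{\nabla}=\nabla+S$ into \eqref{R-def} produces
\begin{equation*}
\tilde{R}(x,y)z=R(x,y)z+(\nabla_{x}S)(y,z)-(\nabla_{y}S)(x,z)+S(x,S(y,z))-S(y,S(x,z)),
\end{equation*}
and contracting with $\tilde{g}^{-1}$ gives $\tilde{\rho}$. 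Repeatedly applying $Q^{3}=\id$ and the identity $P^{2}=P+2\,\id$ to simplify, the outcome rearranges into $\rho+\alpha g+\beta\tilde{g}$ for some smooth $\alpha,\beta$. The uniqueness supplied by the two trace identities from (a) then forces $\alpha=a$ and $\beta=b$, giving \eqref{con-AE}.

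\textbf{Main obstacle.} The crux is part (b): checking that after the Ricci contraction of the corrected curvature the tensorial residue, which involves $\nabla Q$, $\nabla^{2}Q$, and quadratic combinations, collapses entirely into the span of $g$ and $\tilde{g}$. The trace conditions in (a) remove only two of the six independent symmetric components of $\tilde{\rho}-\rho$, so the remaining four cancellations must be extracted from the rigid algebra imposed by $Q^{3}=\id$ together with the isometry condition \eqref{2.12}; this is what makes the clean scalar formula \eqref{con-AE} possible in arbitrary dimension-three $(M,g,Q)$ without any further hypothesis such as $\nabla Q=0$.
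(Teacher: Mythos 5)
First, a point of comparison: the paper does not prove Theorem \ref{connR-R} at all --- it is imported from \cite{fil-dokuzova} --- so there is no in-paper argument to set yours against, and your proposal must stand on its own. Judged that way, your part (a) is correct and cleanly done. Writing $\tilde{g}(x,y)=g(x,Py)$ with $P=Q+Q^{2}$, the relation $Q^{3}=\id$ gives $P$ the eigenvalues $2,-1,-1$, hence $g^{ij}\tilde{g}_{ij}=\tr P=0$ and $\tilde{g}^{ij}g_{ij}=\tr P^{-1}=-\tfrac{3}{2}$; with the conventions of \eqref{def-rho} and \eqref{def-rho2} the two contractions give $\tilde{\tau}^{*}=\tau+3a$ and $\tilde{\tau}=\tau^{*}-\tfrac{3}{2}a+3b$, and since this linear system for $(a,b)$ is triangular with nonzero diagonal, the coefficients are uniquely the ones in \eqref{con-AE}. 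So the easy implication is in place: \emph{if} $\tilde{\rho}-\rho$ lies in the span of $g$ and $\tilde{g}$, the formula follows.

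The gap is that this ansatz \emph{is} the theorem. The substantive content of \eqref{con-AE} is the claim that the symmetric tensor $\tilde{\rho}-\rho$, which a priori has six independent components, lies in the two-dimensional span of $g$ and $\tilde{g}$ (equivalently, that $\tilde{\rho}-\rho$ has the circulant form of \eqref{f2}--\eqref{f21}, constant on the diagonal and constant off the diagonal). Your traces account for two components; the remaining four identities are exactly what part (b) must deliver, and part (b) as written only describes a plan. You introduce $S=\tilde{\nabla}-\nabla$, note correctly that it is built from $\nabla Q$ because $\nabla g=0$, record the standard curvature-comparison formula, and then assert that after contraction ``the outcome rearranges into $\rho+\alpha g+\beta\tilde{g}$'' --- which is precisely the statement to be proved; your own ``Main obstacle'' paragraph concedes the required cancellations are not carried out. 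Nothing in the proposal excludes a residual contribution from $\nabla Q$, $\nabla^{2}Q$ or $S*S$ that is symmetric but not of circulant type. To close the argument one must actually compute: e.g.\ write both metrics in local coordinates via \eqref{f2} and \eqref{f21} (noting $\tilde{g}_{ij}=(A+2B)-g_{ij}$, so both are determined by the two functions $A,B$), compute the Christoffel symbols and Ricci tensors of $g$ and $\tilde{g}$ explicitly, and verify the four residual identities --- which is in effect what the cited source does. As it stands, the proposal determines the coefficients of a decomposition whose existence it does not establish.
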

Further, we apply formulas \eqref{R-def} -- \eqref{r1-r6} to $\tilde{g}$,  $\tilde\nabla$ and $\tilde{R}$.

\subsection{The class $\mathcal{L}_{2}$}

For the manifold $(M, g, Q)$ the following propositions are equivalent (\cite{AE-dokuzova}):
\begin{itemize}
\item[(i)] $(M, g, Q)$ belongs to $\mathcal{L}_{2}$;
  \item[(ii)] the components of the Ricci tensor $\rho$ are
\begin{equation}\label{system-rho}
\rho_{11}=\rho_{22}=\rho_{33},\quad \rho_{12}=\rho_{13}=\rho_{23};
\end{equation}
  \item[(iii)] $(M, g, Q)$ is an almost Einstein manifold and the Ricci tensor $\rho$ is expressed by
 \begin{equation}\label{rho51}
     \rho(x,y)=\frac{\tau}{3}g(x,y)+\big(\frac{\tau}{6}+\frac{\tau^{*}}{3}\big)\tilde{g}(x,y).
\end{equation}
\end{itemize}

Further, we will obtain some geometric properties of $(M, \tilde{g}, Q)$ when $(M, g, Q)$ is in $\mathcal{L}_{2}$. For this purpose, first we state the following
\begin{theorem}\label{thL2-l2}
A manifold $(M, g, Q)$ belongs to $\mathcal{L}_{2}$ if and only if $(M, \tilde{g}, Q)$ belongs to $\mathcal{L}_{2}$.
\end{theorem}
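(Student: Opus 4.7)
\emph{Proof plan.} The plan is to reduce the statement to Theorem \ref{connR-R} by using the Ricci-tensor characterization of $\mathcal{L}_{2}$. By the equivalence (i)$\Leftrightarrow$(ii) recalled just before this theorem, $(M,g,Q)\in\mathcal{L}_{2}$ is equivalent to the system \eqref{system-rho}. Applying the same equivalence to $\tilde{g}$, $\tilde{\nabla}$, $\tilde{R}$ (as explicitly sanctioned by the convention stated after Theorem \ref{connR-R}), membership of $(M,\tilde{g},Q)$ in $\mathcal{L}_{2}$ is equivalent to the analogous system $\tilde{\rho}_{11}=\tilde{\rho}_{22}=\tilde{\rho}_{33}$, $\tilde{\rho}_{12}=\tilde{\rho}_{13}=\tilde{\rho}_{23}$. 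So the task reduces to showing that these two systems are equivalent.

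Next I would invoke Theorem \ref{connR-R} and rewrite \eqref{con-AE} componentwise as $\tilde{\rho}_{ij} = \rho_{ij} + \alpha\, g_{ij} + \beta\, \tilde{g}_{ij}$, where $\alpha = \tfrac{1}{3}(\tilde{\tau}^{*}-\tau)$ and $\beta = \tfrac{1}{6}(2\tilde{\tau}-2\tau^{*}+\tilde{\tau}^{*}-\tau)$ are \emph{scalar} functions. The decisive observation is structural: in the basis $\{e_{1},e_{2},e_{3}\}$ every diagonal entry of $(g_{ij})$ equals $A$ and every off-diagonal entry equals $B$ by \eqref{f2}, and the analogous uniformity holds for $(\tilde{g}_{ij})$ with the entries $2B$ and $A+B$ by \eqref{f21}. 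Consequently, the correction $\alpha g_{ij}+\beta \tilde{g}_{ij}$ contributes one and the same scalar $\alpha A+2\beta B$ to each diagonal $\tilde{\rho}_{ii}$, and one and the same scalar $\alpha B+\beta(A+B)$ to each off-diagonal $\tilde{\rho}_{ij}$.

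It then follows at once that $\tilde{\rho}_{11}=\tilde{\rho}_{22}=\tilde{\rho}_{33}$ iff $\rho_{11}=\rho_{22}=\rho_{33}$, and $\tilde{\rho}_{12}=\tilde{\rho}_{13}=\tilde{\rho}_{23}$ iff $\rho_{12}=\rho_{13}=\rho_{23}$, which, combined with the Ricci characterizations of the first paragraph, yields the claimed equivalence. I do not foresee a substantial obstacle: the argument is essentially a bookkeeping consequence of Theorem \ref{connR-R} together with the uniform structure of the circulant matrices of $g$ and $\tilde{g}$. The only point requiring some care is that the equivalence (i)$\Leftrightarrow$(ii), originally proved for the Riemannian manifold $(M,g,Q)$, is here used for the indefinite manifold $(M,\tilde{g},Q)$; this is precisely the transfer already granted by the authors' convention immediately after Theorem \ref{connR-R}, and no new computation is needed.
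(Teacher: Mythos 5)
Your second step---rewriting \eqref{con-AE} componentwise and observing that the correction $\alpha g_{ij}+\beta\tilde g_{ij}$ adds one and the same scalar to every diagonal entry and one and the same scalar to every off-diagonal entry, so that \eqref{system-rho} holds iff \eqref{system-tilde-rho} holds---is exactly the mechanism the paper uses, and that part of your argument is fine.

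The gap is in the first step. You assert that $(M,\tilde g,Q)\in\mathcal L_2$ is equivalent to $\tilde\rho_{11}=\tilde\rho_{22}=\tilde\rho_{33}$, $\tilde\rho_{12}=\tilde\rho_{13}=\tilde\rho_{23}$ by ``applying the equivalence (i)$\Leftrightarrow$(ii) to $\tilde g$,'' and you claim this transfer is already granted by the convention stated after Theorem~\ref{connR-R}. It is not: that convention only extends formulas \eqref{R-def}--\eqref{r1-r6}, i.e.\ the definitions of the classes and the curvature-component characterizations of Propositions~\ref{ae-dok} and~\ref{1}, to $\tilde g$, $\tilde\nabla$, $\tilde R$. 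The equivalence (i)$\Leftrightarrow$(ii) is stated afterwards, only for the Riemannian manifold $(M,g,Q)$, and is imported from \cite{AE-dokuzova}; its validity for the indefinite metric $\tilde g$ is precisely what the paper proves inside Theorem~\ref{thL2-l2} and then records separately as Corollary~\ref{cor-ro}. The two implications you are skipping are supplied in the paper as follows: from \eqref{system-tilde-rho} one obtains the curvature conditions \eqref{r1-r6-tilde} via the three-dimensional decomposition \eqref{loc-R-tilde} of $\tilde R$ in terms of $\tilde\rho$ and $\tilde g$, and conversely \eqref{r1-r6-tilde} yields \eqref{system-tilde-rho} by contracting with $\tilde g^{ij}$, using that $\tilde g^{11}=\tilde g^{22}=\tilde g^{33}$ and $\tilde g^{12}=\tilde g^{13}=\tilde g^{23}$. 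These steps do go through (they use only the circulant form of $\tilde g$, its non-degeneracy, and $\dim M=3$), so your plan is repairable, but as written it assumes the lemma that carries the main weight of the proof.
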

\begin{proof}
 The local form of \eqref{con-AE} is \begin{equation}\label{con-AE-loc}
     \tilde{\rho}_{ij} = \rho_{ij}+\frac{1}{3}(\tilde{\tau}^{*}-\tau)g_{ij}+\frac{1}{6}(2\tilde{\tau}-2\tau^{*}+\tilde{\tau}^{*}-\tau)\tilde{g}_{ij}.
\end{equation}
Let $(M, g, Q)$ belong to $\mathcal{L}_{2}$. From \eqref{con-AE-loc}, having in mind \eqref{f2}, \eqref{f21} and \eqref{system-rho}, we get \begin{equation}\label{system-tilde-rho}
\tilde{\rho}_{11}=\tilde{\rho}_{22}=\tilde{\rho}_{33},\quad \tilde{\rho}_{12}=\tilde{\rho}_{13}=\tilde{\rho}_{23}.
\end{equation}

It is known that the curvature tensor $R$ for a $3$-dimensional Riemannian manifold is completely determined by the Ricci tensor $\rho$ and the metric $g$, as follows
\begin{equation*}
\begin{split}
R(x, y, z, u)=&-g(x, z)\rho(y,u)-g(y, u)\rho(x, z)+g(y, z)\rho(x, u)\\+&g(x, u)\rho(y, z)+\frac{\tau}{2}\big(g(x, z)g(y, u)-g(y, z)g(x, u)\big).
\end{split}
\end{equation*}
The local form of the above identity, written for $\tilde{R}$, is
\begin{equation}\label{loc-R-tilde}
\tilde{R}_{ijkl}= -\tilde{g}_{ik}\tilde{\rho}_{jl}-\tilde{g}_{jl}\tilde{\rho}_{ik}+\tilde{g}_{jk}\tilde{\rho}_{il}+\tilde{g}_{il}\tilde{\rho}_{jk}+\frac{\tilde{\tau}}{2}\big(\tilde{g}_{ik}\tilde{g}_{jl}-\tilde{g}_{jk}\tilde{g}_{il}\big).
\end{equation}
Taking into account \eqref{f21}, \eqref{system-tilde-rho} and \eqref{loc-R-tilde} we find that the components of the curvature tensor $\tilde{R}$ satisfy
\begin{equation}\label{r1-r6-tilde}
    \tilde{R}_{1212}=\tilde{R}_{1313}=\tilde{R}_{2323},\quad \tilde{R}_{1213}=\tilde{R}_{1323}=-\tilde{R}_{1223}.
\end{equation}
According to Proposition~\ref{1}, the equalities \eqref{r1-r6-tilde} imply that $(M, \tilde{g}, Q)$ belongs to $\mathcal{L}_{2}$.

Conversely, for $(M, \tilde{g}, Q)\in\mathcal{L}_{2}$ conditions \eqref{r1-r6-tilde} hold.
On the other hand, bearing in mind \eqref{f21}, we state that the components of the inverse matrix of $\tilde{g}$ satisfy the equalities
  $\tilde{g}^{11}=\tilde{g}^{22}=\tilde{g}^{33}$ and $\tilde{g}^{12}=\tilde{g}^{13}=\tilde{g}^{23}.$
Then, using \eqref{r1-r6-tilde} and the first equality of \eqref{def-rho2}, we get \eqref{system-tilde-rho}. Therefore, from \eqref{f2}, \eqref{f21}, \eqref{con-AE-loc} and \eqref{system-tilde-rho} it follows that that equalities \eqref{system-rho} are valid, i.e. $(M, g, Q)\in\mathcal{L}_{2}$.
\end{proof}
In the course of the above proof we obtain the following
\begin{corollary}\label{cor-ro}
A manifold $(M, \tilde{g}, Q)$ belongs to $\mathcal{L}_{2}$ if and only if the components of the Ricci tensor $\tilde{\rho}$ satisfy the equalities \eqref{system-tilde-rho}.
\end{corollary}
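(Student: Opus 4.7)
The plan is to observe that the corollary is essentially the ``intermediate step'' of the proof of Theorem~\ref{thL2-l2}: the equivalence between $(M, \tilde{g}, Q) \in \mathcal{L}_{2}$ and the Ricci-type conditions \eqref{system-tilde-rho} is established entirely on the $\tilde{g}$ side, without any reference to $g$. I would therefore extract and repackage the two relevant implications from that proof.

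First, I would apply Proposition~\ref{1} directly to the associated manifold $(M, \tilde{g}, Q)$. Since $\tilde{g}$ is itself a $Q$-compatible metric with a circulant component matrix \eqref{f21}, the proposition says that $(M, \tilde{g}, Q)\in\mathcal{L}_{2}$ is equivalent to the curvature conditions \eqref{r1-r6-tilde} on $\tilde{R}$. So it suffices to show that \eqref{r1-r6-tilde} and \eqref{system-tilde-rho} are equivalent.

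For the direction \eqref{r1-r6-tilde} $\Rightarrow$ \eqref{system-tilde-rho}, I would use the fact, recalled in the proof of Theorem~\ref{thL2-l2}, that $(\tilde{g}^{ij})$ is again circulant, namely $\tilde{g}^{11}=\tilde{g}^{22}=\tilde{g}^{33}$ and $\tilde{g}^{12}=\tilde{g}^{13}=\tilde{g}^{23}$. Substituting these equalities together with \eqref{r1-r6-tilde} into the defining formula $\tilde{\rho}(y,z)=\tilde{g}^{ij}\tilde{R}(e_i,y,z,e_j)$ from \eqref{def-rho2} and computing $\tilde{\rho}_{11},\tilde{\rho}_{22},\tilde{\rho}_{33}$ and $\tilde{\rho}_{12},\tilde{\rho}_{13},\tilde{\rho}_{23}$ yields \eqref{system-tilde-rho} after straightforward bookkeeping.

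For the converse \eqref{system-tilde-rho} $\Rightarrow$ \eqref{r1-r6-tilde}, I would invoke the 3-dimensional identity \eqref{loc-R-tilde}, which expresses $\tilde{R}_{ijkl}$ purely in terms of $\tilde{g}_{ij}$ and $\tilde{\rho}_{ij}$. Since both $(\tilde{g}_{ij})$ (by \eqref{f21}) and $(\tilde{\rho}_{ij})$ (by assumption) display the same circulant symmetry, direct substitution into \eqref{loc-R-tilde} forces $\tilde{R}_{1212}=\tilde{R}_{1313}=\tilde{R}_{2323}$ and $\tilde{R}_{1213}=\tilde{R}_{1323}=-\tilde{R}_{1223}$, i.e.\ precisely \eqref{r1-r6-tilde}. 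Combined with Proposition~\ref{1}, this completes the equivalence. The main obstacle is bookkeeping rather than anything conceptual: one has to verify that the circulant pattern genuinely propagates through the Ricci contraction and through the curvature formula \eqref{loc-R-tilde}, which is a routine but slightly tedious index computation.
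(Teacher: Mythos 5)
Your proposal is correct and follows essentially the same route as the paper: the corollary is explicitly presented there as a byproduct of the proof of Theorem~\ref{thL2-l2}, and the two implications you extract (contracting \eqref{r1-r6-tilde} with the circulant inverse $(\tilde{g}^{ij})$ to get \eqref{system-tilde-rho}, and substituting \eqref{system-tilde-rho} and \eqref{f21} into the 3-dimensional identity \eqref{loc-R-tilde} to recover \eqref{r1-r6-tilde}, with Proposition~\ref{1} applied to $\tilde{g}$ closing the loop) are exactly the steps the paper uses. No gaps.
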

 \begin{theorem}\label{Th-AE}
A manifold $(M, \tilde{g}, Q)$ belongs to $\mathcal{L}_{2}$ if and only if $(M, \tilde{g}, Q)$ is an almost Einstein manifold. Then we have
 \begin{equation}\label{rho51-tilde}
     \tilde{\rho}(x,y)=\frac{\tilde{\tau}^{*}}{3}g(x,y)+\big(\frac{\tilde{\tau}}{3}+\frac{\tilde{\tau}^{*}}{6}\big)\tilde{g}(x,y).
\end{equation}
\end{theorem}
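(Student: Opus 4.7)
The plan is to prove the two implications separately, using the machinery already assembled above.

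For the forward direction, suppose $(M,\tilde g, Q)\in\mathcal{L}_{2}$. I would first invoke Theorem~\ref{thL2-l2} to conclude $(M,g,Q)\in\mathcal{L}_{2}$, which by the equivalence (i)--(iii) recalled just before Theorem~\ref{thL2-l2} gives the representation \eqref{rho51} for $\rho$. Substituting this into the relation \eqref{con-AE} of Theorem~\ref{connR-R} and grouping terms by $g$ and $\tilde g$, the coefficients involving $\tau$ and $\tau^{*}$ should telescope, leaving precisely $\tfrac{\tilde\tau^{*}}{3}$ in front of $g$ and $\tfrac{\tilde\tau}{3}+\tfrac{\tilde\tau^{*}}{6}$ in front of $\tilde g$. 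This simultaneously yields formula \eqref{rho51-tilde} and shows, via the natural analogue of Definition~\ref{defAE} applied to $\tilde g$, that $(M,\tilde g, Q)$ is almost Einstein.

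For the converse, assume $\tilde\rho(x,y)=\alpha g(x,y)+\beta\,\tilde g(x,y)$ with smooth functions $\alpha,\beta$. A direct look at the matrices \eqref{f2} and \eqref{f21} shows that both $g_{ij}$ and $\tilde g_{ij}$ have all diagonal entries equal to each other and all off-diagonal entries equal to each other, so any smooth linear combination inherits this pattern. Hence $\tilde\rho_{ij}$ automatically satisfies the system \eqref{system-tilde-rho}, and Corollary~\ref{cor-ro} yields $(M,\tilde g, Q)\in\mathcal{L}_{2}$.

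The only subtle step is the forward substitution: one must track the four scalar quantities $\tau,\tau^{*},\tilde\tau,\tilde\tau^{*}$ together with the two transfer coefficients appearing in \eqref{con-AE}, and verify that the $\tau$- and $\tau^{*}$-contributions cancel so that only tilde-quantities survive. No new geometric idea is required beyond this bookkeeping, since Theorems~\ref{thL2-l2} and \ref{connR-R} combined with Corollary~\ref{cor-ro} supply everything else.
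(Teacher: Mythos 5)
Your proposal is correct and follows essentially the same route as the paper's proof: Theorem~\ref{thL2-l2} plus the representation \eqref{rho51} substituted into \eqref{con-AE} for the forward direction (the $\tau$- and $\tau^{*}$-terms do cancel as you predict), and the circulant pattern of \eqref{f2}, \eqref{f21} together with Corollary~\ref{cor-ro} for the converse. No discrepancies to report.
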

\begin{proof} Let $(M, \tilde{g}, Q)$ belong to $\mathcal{L}_{2}$. Due to Theorem~\ref{thL2-l2}, $(M, g, Q)$ is in $\mathcal{L}_{2}$ and the equality \eqref{rho51} is valid. We substitute \eqref{rho51} into \eqref{con-AE} and obtain \eqref{rho51-tilde}, i.e. $(M, \tilde{g}, Q)$ is an almost Einstein manifold.

Vice versa. Let $(M, \tilde{g}, Q)$ be an almost Einstein manifold. Bearing in mind Definition~\ref{defAE} we have $\tilde{\rho}(x,y)=\alpha g(x,y)+\beta\tilde{g}(x,y)$. Applying \eqref{f2} and \eqref{f21} in the latter expression of $\tilde{\rho}$ we get \eqref{system-tilde-rho}. Due to Corollary~\ref{cor-ro}, the manifold $(M, \tilde{g}, Q)$ is in $\mathcal{L}_{2}$.
\end{proof}

\begin{theorem}\label{Th-RAE}
Let $(M, \tilde{g}, Q)$ belong to $\mathcal{L}_{2}$. Then the curvature tensor $\tilde{R}$ has the form
 \begin{equation}\label{fR-tilde}
\tilde{R} = \big(\frac{\tilde{\tau}^{*}}{3}+\frac{\tilde{\tau}}{6}\big)\tilde{\pi}_{1}+\frac{\tilde{\tau}^{*}}{3}\tilde{\pi}_{2},
\end{equation}
where \begin{align}\label{pi-pi}
\begin{split}
\tilde{\pi}_{1}(x, y, z, u )&=\mbox{}\tilde{g}(y, z)\tilde{g}(x, u) - \tilde{g}(x, z)\tilde{g}(y, u),\\
\tilde{\pi}_{2}(x, y, z, u)&=\mbox{}g(y, z)\tilde{g}(x, u)+g(x, u)\tilde{g}(y, z)\\&-g(x, z)\tilde{g}(y, u)-g(y, u)\tilde{g}(x, z).
\end{split}
\end{align}
\end{theorem}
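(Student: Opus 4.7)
The plan is to combine Theorem~\ref{Th-AE}, which already gives the explicit form of $\tilde{\rho}$ when $(M,\tilde{g},Q)\in\mathcal{L}_{2}$, with the general three-dimensional formula that expresses the curvature tensor of a 3-dimensional Riemannian manifold in terms of its Ricci tensor, metric and scalar curvature. Since this formula was recalled in the proof of Theorem~\ref{thL2-l2} (see \eqref{loc-R-tilde}), nothing new about the structure is needed; the argument is essentially a substitution followed by regrouping.

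First, I would write \eqref{loc-R-tilde} in its tensorial form
\begin{equation*}
\begin{split}
\tilde{R}(x,y,z,u) =\mbox{}& -\tilde{g}(x,z)\tilde{\rho}(y,u) - \tilde{g}(y,u)\tilde{\rho}(x,z)\\
&+ \tilde{g}(y,z)\tilde{\rho}(x,u) + \tilde{g}(x,u)\tilde{\rho}(y,z)\\
&+ \frac{\tilde{\tau}}{2}\bigl(\tilde{g}(x,z)\tilde{g}(y,u) - \tilde{g}(y,z)\tilde{g}(x,u)\bigr),
\end{split}
\end{equation*}
which holds because $\dim M = 3$. Then I would invoke Theorem~\ref{Th-AE} to substitute
\begin{equation*}
\tilde{\rho}=\frac{\tilde{\tau}^{*}}{3}g + \Bigl(\frac{\tilde{\tau}}{3}+\frac{\tilde{\tau}^{*}}{6}\Bigr)\tilde{g}
\end{equation*}
into each of the four $\tilde{\rho}$-terms, splitting the sum into a $g$-part and a $\tilde{g}$-part.

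The next step is pure bookkeeping. The four $g$-terms assemble exactly into $\tilde{\pi}_{2}(x,y,z,u)$ multiplied by $\tfrac{\tilde{\tau}^{*}}{3}$, which accounts for the second summand in \eqref{fR-tilde}. The four $\tilde{g}$-terms coming from $\tilde{\rho}$ collapse (using the symmetry of $\tilde{g}$) to $2\bigl(\tfrac{\tilde{\tau}}{3}+\tfrac{\tilde{\tau}^{*}}{6}\bigr)\tilde{\pi}_{1}(x,y,z,u)$, while the remaining $\tfrac{\tilde{\tau}}{2}$-term equals $-\tfrac{\tilde{\tau}}{2}\tilde{\pi}_{1}(x,y,z,u)$. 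Adding these two contributions yields a coefficient
\begin{equation*}
\frac{2\tilde{\tau}}{3}+\frac{\tilde{\tau}^{*}}{3}-\frac{\tilde{\tau}}{2}=\frac{\tilde{\tau}}{6}+\frac{\tilde{\tau}^{*}}{3}
\end{equation*}
in front of $\tilde{\pi}_{1}$, which matches the first summand in \eqref{fR-tilde}.

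There is no real obstacle here: the only point that requires care is correctly pairing the arguments when rewriting the $g$-contributions as $\tilde{\pi}_{2}$, so that the signs and the ordering $(x,u),(y,z),(x,z),(y,u)$ prescribed in \eqref{pi-pi} are reproduced. Once that pairing is checked, the theorem follows immediately from the identity above.
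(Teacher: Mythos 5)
Your proposal is correct and follows exactly the paper's route: the paper's proof is the one-line remark that the result ``follows directly from \eqref{loc-R-tilde} and \eqref{rho51-tilde},'' i.e. substituting the almost Einstein form of $\tilde{\rho}$ into the three-dimensional curvature decomposition, which is precisely what you carry out. Your coefficient computation $\tfrac{2\tilde{\tau}}{3}+\tfrac{\tilde{\tau}^{*}}{3}-\tfrac{\tilde{\tau}}{2}=\tfrac{\tilde{\tau}}{6}+\tfrac{\tilde{\tau}^{*}}{3}$ checks out, so you have simply made explicit the bookkeeping the paper leaves to the reader.
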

\begin{proof}
The proof follows directly from \eqref{loc-R-tilde} and \eqref{rho51-tilde}.
\end{proof}

Next, with the help of Theorem~\ref{Th-AE}, we establish the following
\begin{corollary}\label{EM}
Let $(M, \tilde{g}, Q)$ belong to $\mathcal{L}_{2}$. Then  $(M, \tilde{g}, Q)$ is an Einstein manifold if and only if the scalar curvature $\tilde{\tau}^{*}$ is equal to zero.
\end{corollary}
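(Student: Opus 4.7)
The plan is to read off the Einstein condition directly from the almost-Einstein formula \eqref{rho51-tilde} established in Theorem~\ref{Th-AE}. Since $(M,\tilde{g},Q)\in\mathcal{L}_{2}$, that theorem gives
\[
\tilde{\rho}(x,y)=\frac{\tilde{\tau}^{*}}{3}g(x,y)+\Bigl(\frac{\tilde{\tau}}{3}+\frac{\tilde{\tau}^{*}}{6}\Bigr)\tilde{g}(x,y),
\]
and $(M,\tilde{g},Q)$ is Einstein (with respect to its own metric $\tilde{g}$) if and only if $\tilde{\rho}=\alpha\,\tilde{g}$ for some smooth function $\alpha$.

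First I would handle the easy direction: assuming $\tilde{\tau}^{*}=0$, the $g$-term in \eqref{rho51-tilde} disappears and $\tilde{\rho}=(\tilde{\tau}/3)\tilde{g}$, so $(M,\tilde{g},Q)$ is Einstein with $\alpha=\tilde{\tau}/3$.

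For the converse, I would assume $\tilde{\rho}=\alpha\tilde{g}$ and subtract this from \eqref{rho51-tilde} to obtain
\[
\frac{\tilde{\tau}^{*}}{3}\,g(x,y)+\Bigl(\frac{\tilde{\tau}}{3}+\frac{\tilde{\tau}^{*}}{6}-\alpha\Bigr)\tilde{g}(x,y)=0.
\]
The key step is then to conclude $\tilde{\tau}^{*}=0$ from the linear independence of $g$ and $\tilde{g}$ as $(0,2)$-tensors. This is the main (and only nontrivial) point, and it follows from the explicit matrix forms \eqref{f2} and \eqref{f21}: any linear relation $\lambda g_{ij}+\mu\tilde{g}_{ij}=0$ reduces to the system $\lambda A+2\mu B=0$, $\lambda B+\mu(A+B)=0$, whose determinant is $A(A+B)-2B^{2}=(A-B)(A+2B)$, which is strictly positive under the standing assumption $A>B>0$. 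Hence $\lambda=\mu=0$, forcing the coefficient $\tilde{\tau}^{*}/3$ in the displayed relation to vanish.

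I do not expect any real obstacle here; the whole argument is a coefficient comparison once \eqref{rho51-tilde} is available, with the only subtle point being to justify that $g$ and $\tilde{g}$ are pointwise linearly independent, which is immediate from the circulant structure and the non-degeneracy condition $A>B>0$.
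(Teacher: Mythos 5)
Your proposal is correct and follows essentially the same route as the paper, which simply observes that \eqref{rho51-tilde} takes the Einstein form \eqref{E} if and only if $\tilde{\tau}^{*}=0$. The only difference is that you make explicit the pointwise linear independence of $g$ and $\tilde{g}$ (via the determinant $(A-B)(A+2B)>0$), a step the paper leaves implicit; this is a worthwhile clarification but not a different argument.
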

\begin{proof}
The equality \eqref{rho51-tilde} is similar to \eqref{E} if and only if $\tilde{\tau}^{*}=0$. In this case the Ricci tensor has the form \begin{equation}\label{E2}\tilde{\rho}(x,y)=\dfrac{\tilde{\tau}}{3}\tilde{g}(x,y).\end{equation}
\end{proof}
Now, taking into account \eqref{f21}, \eqref{system-tilde-rho} and \eqref{E2}, we make the following
\begin{remark} Every Einstein manifold $(M, \tilde{g}, Q)$ belongs to $\mathcal{L}_{2}$.\end{remark}
\subsection{The class $\mathcal{L}_{1}$}

\begin{theorem}\label{prop-l1}
Let $(M, \tilde{g}, Q)$ belong to $\mathcal{L}_{2}$. Then $(M, \tilde{g}, Q)$ belongs to $\mathcal{L}_{1}$ if and only if the scalar curvatures satisfy $\tilde{\tau}^{*}=-\tilde{\tau}$.
\end{theorem}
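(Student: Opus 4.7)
The plan is to translate the membership criteria for $\mathcal{L}_{1}$ and $\mathcal{L}_{2}$ (Propositions~\ref{ae-dok} and~\ref{1}) into a single additional scalar condition on $\tilde{R}$, and then evaluate that condition using the explicit formula for $\tilde{R}$ supplied by Theorem~\ref{Th-RAE}.

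First, I would note that the $\mathcal{L}_{1}$-criterion of Proposition~\ref{ae-dok} (applied to $\tilde{R}$) and the $\mathcal{L}_{2}$-criterion of Proposition~\ref{1} (applied to $\tilde{R}$) differ only by the additional equality $\tilde{R}_{1212}=-\tilde{R}_{1213}$. Hence, once $(M,\tilde{g},Q)\in\mathcal{L}_{2}$ is assumed, it suffices to show that $\tilde{R}_{1212}=-\tilde{R}_{1213}$ is equivalent to $\tilde{\tau}^{*}=-\tilde{\tau}$.

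Next, using \eqref{fR-tilde} together with the explicit matrices \eqref{f2} and \eqref{f21}, I would compute the two components $\tilde{\pi}_{1}(e_{1},e_{2},e_{1},e_{2})$, $\tilde{\pi}_{2}(e_{1},e_{2},e_{1},e_{2})$ and $\tilde{\pi}_{1}(e_{1},e_{2},e_{1},e_{3})$, $\tilde{\pi}_{2}(e_{1},e_{2},e_{1},e_{3})$. A direct expansion factors as $(A-B)(A+3B)$, $-2B(A-B)$, $(A+B)(A-B)$, and $-A(A-B)$ respectively, so both $\tilde{R}_{1212}$ and $\tilde{R}_{1213}$ carry a common factor $(A-B)$. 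Assembling them via \eqref{fR-tilde} and equating $\tilde{R}_{1212}+\tilde{R}_{1213}=0$, one obtains (after multiplying by $6$ and regrouping) an identity of the form $2(A+2B)\bigl(\tilde{\tau}^{*}+\tilde{\tau}\bigr)=0$. Since $A>B>0$ gives $A+2B>0$ and $A-B\neq 0$, this is equivalent to $\tilde{\tau}^{*}=-\tilde{\tau}$, which closes both directions.

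The main obstacle is purely computational: keeping the substitution of \eqref{f2} and \eqref{f21} into $\tilde{\pi}_{1}$ and $\tilde{\pi}_{2}$ organized so that the common factor $(A-B)$ emerges and the remaining linear combination of $\tilde{\tau}$ and $\tilde{\tau}^{*}$ collapses to a single equation. There is no conceptual difficulty beyond recognising that in the presence of the $\mathcal{L}_{2}$-assumption the difference between the two classes is captured by a single scalar relation, after which Theorem~\ref{Th-RAE} reduces the whole problem to elementary algebra in $A$, $B$, $\tilde{\tau}$, $\tilde{\tau}^{*}$.
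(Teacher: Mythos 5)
Your proposal is correct and follows essentially the same route as the paper: reduce the $\mathcal{L}_{1}$-membership (given $\mathcal{L}_{2}$) to the single condition $\tilde{R}_{1212}=-\tilde{R}_{1213}$, compute these components from \eqref{fR-tilde} and \eqref{pi-pi} using \eqref{f2} and \eqref{f21}, and conclude from $(A-B)(A+2B)\neq 0$. Your factored identity $2(A+2B)(\tilde{\tau}^{*}+\tilde{\tau})=0$ even carries the correct sign, whereas the paper's displayed intermediate equation reads $(\tilde{\tau}-\tilde{\tau}^{*})(A-B)(A+2B)=0$, an apparent typo given the stated conclusion $\tilde{\tau}^{*}=-\tilde{\tau}$.
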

\begin{proof}
 From \eqref{f2}, \eqref{f21}, \eqref{r1-r6-tilde}, \eqref{fR-tilde} and \eqref{pi-pi} we calculate the components of $\tilde{R}$:
\begin{equation}\label{r-a-b}
\begin{split}
\tilde{R}_{1212} &= \tilde{R}_{1313}=\tilde{R}_{2323}=\frac{\tilde{\tau}^{*}}{3}(A^{2}-B^{2})+\frac{\tilde{\tau}}{6}(A^{2}+2AB-3B^{2}),\\
\tilde{R}_{1213} &= \tilde{R}_{1323} = -\tilde{R}_{1223} = \frac{\tilde{\tau}^{*}}{3}(AB-B^{2})+\frac{\tilde{\tau}}{6}(A^{2}-B^{2}).
\end{split}
\end{equation}

If $(M, \tilde{g}, Q)\in\mathcal{L}_{1}$ then, due to Proposition~\ref{ae-dok}, we have $\tilde{R}_{1212}=-\tilde{R}_{1213}$. Hence, from \eqref{r-a-b} it follows that
\begin{equation*}
(\tilde{\tau}-\tilde{\tau}^{*})(A-B)(A+2B)=0.
\end{equation*}
Therefore, having in mind that $A>B>0$, we get $\tilde{\tau}^{*}=-\tilde{\tau}$.

Vice versa. Let $\tilde{\tau}^{*}=-\tilde{\tau}$ be valid.  From \eqref{r-a-b} we find that $\tilde{R}_{1212}=-\tilde{R}_{1213}$, so $(M, \tilde{g}, Q)$ is in $\mathcal{L}_{1}$.
\end{proof}
Furthermore, due to Theorem~\ref{Th-AE}, Theorem~\ref{Th-RAE} and Theorem~\ref{prop-l1}, we obtain
\begin{corollary}\label{l1-rho}
Let $(M, \tilde{g}, Q)$ belong to $\mathcal{L}_{1}$. Then
\begin{itemize}
  \item[(i)] the Ricci tensor is $\tilde{\rho}(x,y)=\dfrac{\tilde{\tau}}{6}\big(\tilde{g}(x,y)-2g(x,y)\big)$ and it is degenerate;
  \item[(ii)] the curvature tensor is
     $\tilde{R} = -\dfrac{\tilde{\tau}}{6}(\tilde{\pi}_{1}+2\tilde{\pi}_{2}).$
\end{itemize}
 \end{corollary}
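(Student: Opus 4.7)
The plan is to reduce everything to a single substitution $\tilde{\tau}^{*}=-\tilde{\tau}$ and then read off both formulas from the results already established for the class $\mathcal{L}_{2}$.

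First I would invoke the inclusion $\mathcal{L}_{1}\subset\mathcal{L}_{2}$, which gives that any manifold $(M,\tilde{g},Q)\in\mathcal{L}_{1}$ automatically lies in $\mathcal{L}_{2}$. Consequently Theorem~\ref{Th-AE}, Theorem~\ref{Th-RAE} and Theorem~\ref{prop-l1} all apply. In particular, Theorem~\ref{prop-l1} hands me the key scalar identity $\tilde{\tau}^{*}=-\tilde{\tau}$.

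For part (i), I would substitute this identity into the formula \eqref{rho51-tilde} from Theorem~\ref{Th-AE}. The $g$-coefficient becomes $\tilde{\tau}^{*}/3=-\tilde{\tau}/3$ and the $\tilde{g}$-coefficient collapses from $\tilde{\tau}/3+\tilde{\tau}^{*}/6$ to $\tilde{\tau}/6$, giving exactly $\tilde{\rho}(x,y)=\tfrac{\tilde{\tau}}{6}(\tilde{g}(x,y)-2g(x,y))$. For the degeneracy claim, I would work locally: using \eqref{f2} and \eqref{f21}, the matrix $(\tilde{g}_{ij}-2g_{ij})$ equals $(A-B)$ times the circulant matrix with rows $(-2,1,1)$, $(1,-2,1)$, $(1,1,-2)$; since each row sums to zero this matrix is singular (the vector $(1,1,1)$ lies in its kernel), and because $A>B>0$ by assumption, $(\tilde{\rho}_{ij})$ is degenerate whenever $\tilde{\tau}\neq 0$. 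If $\tilde{\tau}=0$ then $\tilde{\rho}\equiv 0$, which is trivially degenerate.

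For part (ii), I would substitute $\tilde{\tau}^{*}=-\tilde{\tau}$ into \eqref{fR-tilde}: the coefficient of $\tilde{\pi}_{1}$ becomes $-\tilde{\tau}/3+\tilde{\tau}/6=-\tilde{\tau}/6$ and the coefficient of $\tilde{\pi}_{2}$ becomes $-\tilde{\tau}/3$. Factoring out $-\tilde{\tau}/6$ yields $\tilde{R}=-\tfrac{\tilde{\tau}}{6}(\tilde{\pi}_{1}+2\tilde{\pi}_{2})$, as claimed.

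The whole argument is essentially a two-line substitution, so there is no real obstacle; the only step that requires any actual computation is verifying the degeneracy of $\tilde{\rho}$ from its circulant-type coordinate matrix, and that reduces to the observation that the row sums vanish.
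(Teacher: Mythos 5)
Your proposal is correct and follows essentially the same route as the paper: substitute $\tilde{\tau}^{*}=-\tilde{\tau}$ (from Theorem~\ref{prop-l1}, justified via $\mathcal{L}_{1}\subset\mathcal{L}_{2}$) into \eqref{rho51-tilde} and \eqref{fR-tilde}. Your degeneracy argument via the vanishing row sums of $(\tilde{g}_{ij}-2g_{ij})$ is just a cleaner phrasing of the paper's computation of $\tilde{\rho}_{11}=\frac{\tilde{\tau}}{3}(B-A)$, $\tilde{\rho}_{12}=\frac{\tilde{\tau}}{6}(A-B)$ and the observation that $\det(\tilde{\rho}_{ij})=0$.
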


\begin{proof}
(i) Applying the equality $\tilde{\tau}^{*}=-\tilde{\tau}$ in \eqref{rho51-tilde} we get $$\tilde{\rho}=\dfrac{\tilde{\tau}}{6}\big(\tilde{g}(x,y)-2g(x,y)\big),$$ which because of \eqref{f2} and \eqref{f21} yields $$\tilde{\rho}_{11}=\dfrac{\tilde{\tau}}{3}(B-A),\ \tilde{\rho}_{12}=\dfrac{\tilde{\tau}}{6}(A-B).$$ Consequently, we have that $\det(\tilde{\rho}_{ij})=0$.

(ii) Using the equality $\tilde{\tau}^{*}=-\tilde{\tau}$, from \eqref{fR-tilde} we find $\tilde{R} = -\dfrac{\tilde{\tau}}{6}(\tilde{\pi}_{1}+2\tilde{\pi}_{2}),$ where $\tilde{\pi}_{1}$ and $\tilde{\pi}_{2}$ are determined by \eqref{pi-pi}.
\end{proof}
\begin{remark} A manifold $(M, \tilde{g}, Q)\in\mathcal{L}_{1}$ does not admit Einstein metric.\end{remark}

\subsection{The class $\mathcal{L}_{0}$}
In \cite{Raz}, it is proved that $(M, g, Q)$ belongs to $\mathcal{L}_{0}$ if and only if the functions $A$ and $B$ satisfy the following matrix equality
\begin{equation}\label{parallel2}
    \grad A=\grad B\begin{pmatrix}
      -1 & 1 & 1 \\
      1 & -1 & 1 \\
      1 & 1 & -1 \\
    \end{pmatrix}.
\end{equation}
\begin{proposition}\label{confequiv}
The manifold $(M, \tilde{g}, Q)$ belongs to $\mathcal{L}_{0}$ if and only if $(M, g, Q)$ belongs to $\mathcal{L}_{0}$.
\end{proposition}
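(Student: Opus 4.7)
The plan is to reduce both sides of the equivalence to the matrix gradient relation \eqref{parallel2} and then to connect the two relations by a short algebraic identity about the circulant matrix appearing there.

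First, I would observe that the associated metric $\tilde g$ has, by \eqref{f21}, exactly the same circulant shape as $g$ in \eqref{f2}, only with the entries $A$ and $B$ replaced by $\tilde A=2B$ and $\tilde B=A+B$. Moreover, $\tilde g$ is $Q$-compatible in the sense of \eqref{2.12}: using $Q^{3}=\id$ and the compatibility of $g$ one has
\begin{equation*}
\tilde g(Qx,Qy)=g(Qx,Q^{2}y)+g(Q^{2}x,Qy)=g(x,Qy)+g(Qx,y)=\tilde g(x,y).
\end{equation*}
Consequently the derivation in \cite{Raz} of the criterion \eqref{parallel2} applies verbatim to $(M,\tilde g,Q)$ (positive definiteness is not used in that derivation, only the circulant form of the metric and its $Q$-compatibility). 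Hence $(M,\tilde g,Q)\in\mathcal{L}_{0}$ is equivalent to
\begin{equation*}
\grad\tilde A=\grad\tilde B\cdot N,\qquad\text{i.e.}\qquad 2\grad B=(\grad A+\grad B)\cdot N,
\end{equation*}
where $N$ denotes the circulant matrix in \eqref{parallel2}.

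Next I would establish the purely algebraic identity
\begin{equation*}
N^{2}+N=2I,\qquad \det N=4,
\end{equation*}
by a direct $3\times 3$ computation (writing $N=J-2I$ with $J$ the all-ones matrix makes this immediate: $N^{2}=(J-2I)^{2}=3J-4J+4I=4I-J$, whence $N^{2}+N=2I$). With this identity in hand, the equivalence is a one-line manipulation. If $\grad A=\grad B\cdot N$, then
\begin{equation*}
(\grad A+\grad B)\cdot N=\grad B\cdot(N^{2}+N)=2\grad B,
\end{equation*}
which is precisely the $\mathcal{L}_{0}$ condition for $\tilde g$. Conversely, $2\grad B=(\grad A+\grad B)\cdot N$ rewrites as $\grad A\cdot N=\grad B\cdot(2I-N)=\grad B\cdot N^{2}$, and since $N$ is invertible (as $\det N=4\neq 0$) we may cancel $N$ on the right and recover $\grad A=\grad B\cdot N$, i.e. $(M,g,Q)\in\mathcal{L}_{0}$.

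The main obstacle, such as it is, is the very first step: justifying that the gradient characterization \eqref{parallel2} remains valid when transferred to $\tilde g$ despite $\tilde g$ being indefinite (so that $\tilde A<\tilde B$ rather than $\tilde A>\tilde B>0$). Once one is satisfied that the proof of that criterion in \cite{Raz} uses only the circulant form of the metric matrix and the compatibility \eqref{2.12}, the remainder of the argument is the elementary identity $N^{2}+N=2I$ together with the invertibility of $N$.
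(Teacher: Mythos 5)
Your proof is correct and follows essentially the same route as the paper: both reduce the statement to the gradient criterion \eqref{parallel2} applied to $g$ and to $\tilde g$ (with entries $2B$ and $A+B$) and then check that the two linear systems on $\grad A$, $\grad B$ are equivalent. The only differences are cosmetic and in your favor — you package the componentwise manipulation of \eqref{sys-g-tilde}--\eqref{paralel3} into the identity $N^{2}+N=2I$ with $N$ invertible, and you explicitly justify that the criterion from \cite{Raz} transfers to the indefinite metric $\tilde g$, a point the paper passes over in silence.
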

\begin{proof}
Let $(M, \tilde{g}, Q)$ be in $\mathcal{L}_{0}$. Thus the components of the metric \eqref{f21} satisfy an equality of the type \eqref{parallel2}. Therefore we have
\begin{equation*}
    \grad 2B=\grad (A+B)\begin{pmatrix}
      -1 & 1 & 1 \\
      1 & -1 & 1 \\
      1 & 1 & -1 \\
    \end{pmatrix},
\end{equation*}
i.e.
\begin{equation}\label{sys-g-tilde}
\begin{split} 2B_{1}=-A_{1}-B_{1}+A_{2}+B_{2}+A_{3}+B_{3},\\ 2B_{2}=-A_{2}-B_{2}+A_{3}+B_{3}+A_{1}+B_{1},\\
2B_{3}=-A_{3}-B_{3}+A_{2}+B_{2}+A_{1}+B_{1},\end{split}\end{equation}
where $A_{i}=\frac{\partial A}{\partial x_{i}}$,  $B_{j}=\frac{\partial B}{\partial x_{j}}$.
The system \eqref{sys-g-tilde} is reduced to \begin{equation}\label{paralel3} A_{1}=-B_{1}+B_{2}+B_{3},\quad
A_{2}=B_{1}-B_{2}+B_{3},\quad A_{3}=B_{1}+B_{2}-B_{3},\end{equation}
 which is equivalent to the matrix equality \eqref{parallel2}, i.e., $(M, g, Q)\in\mathcal{L}_{0}$.

Vice versa. If $(M, g, Q)$ is in $\mathcal{L}_{0}$, then we have \eqref{paralel3} which implies \eqref{sys-g-tilde}. Consequently, the components of the metric $\tilde{g}$ satisfy an equality of the type \eqref{parallel2}. Hence $(M, \tilde{g}, Q)\in\mathcal{L}_{0}$.
\end{proof}

\section{Some curvature properties of the considered manifolds}\label{Sec:4}
In this section we consider some curvature properties of the associated manifold $(M, \tilde{g}, Q)$.

According to the well-known definitions we say that:
\begin{itemize}
  \item[(i)] a 2-plane $\{x, y\}$ spanned by the vectors $x, y \in T_{p}M$ is non-degenerate if $\tilde{g}(x, x)\tilde{g}(y, y)-\tilde{g}^{2}(x, y)\neq 0$;
  \item[(ii)] the sectional curvature of a non-degenerate $2$-plane $\{x, y\}$ spanned by the vectors $x, y \in T_{p}M$ is the value
\begin{equation}\label{3.3}
   \tilde{k}(x,y)=\frac{\tilde{R}(x, y, x, y)}{\tilde{g}(x, x)\tilde{g}(y, y)-\tilde{g}^{2}(x, y)}.
\end{equation}
\end{itemize}

Further in this section, we suppose that $x$ induces a $Q$-basis in $T_{p}M$ and $\varphi$ is the angle between $x$ and $Qx$ with respect to $g$. Thus the properties \eqref{varphi} are valid.
\begin{lemma}\label{lem2}
The 2-plane $\{x, Qx\}$ is non-degenerate, with respect to $\tilde{g}$, if and only if $\varphi\neq\arccos\big(-\frac{1}{3}\big)$. If $\{x, Qx\}$ is a non-degenerate 2-plane, then $\{x, Q^{2}x\}$ and $\{Qx, Q^{2}x\}$ are also non-degenerate 2-planes.
\end{lemma}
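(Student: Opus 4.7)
The plan is to reduce the non-degeneracy condition $\tilde{g}(x,x)\tilde{g}(Qx,Qx)-\tilde{g}^{2}(x,Qx)\neq 0$ to a polynomial condition on $\cos\varphi$. First I would compute the three relevant values of $\tilde{g}$ by applying \eqref{metricf}, the compatibility \eqref{2.12}, and $Q^{3}=\id$. This gives
$\tilde{g}(x,x)=2g(x,Qx)$, $\tilde{g}(Qx,Qx)=2g(Qx,Q^{2}x)=2g(x,Qx)$, and $\tilde{g}(x,Qx)=g(x,Q^{2}x)+g(x,x)$, where in each case I use $g(Qu,Qv)=g(u,v)$ to strip off powers of $Q$.

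Next, I would use \eqref{varphi}, which tells me $|x|_{g}=|Qx|_{g}=|Q^{2}x|_{g}$ and that the three pairwise angles between $x, Qx, Q^{2}x$ all equal $\varphi$. Writing $t=g(x,x)$, this converts the expressions above into $\tilde{g}(x,x)=\tilde{g}(Qx,Qx)=2t\cos\varphi$ and $\tilde{g}(x,Qx)=t(1+\cos\varphi)$. Substituting into the discriminant yields
\begin{equation*}
\tilde{g}(x,x)\tilde{g}(Qx,Qx)-\tilde{g}^{2}(x,Qx)=t^{2}\bigl[4\cos^{2}\varphi-(1+\cos\varphi)^{2}\bigr]=t^{2}(\cos\varphi-1)(3\cos\varphi+1).
\end{equation*}
Since $\varphi\in(0,2\pi/3)$ rules out $\cos\varphi=1$, the discriminant vanishes precisely when $\cos\varphi=-\tfrac{1}{3}$, establishing the first claim.

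For the second claim, I would observe that if $x$ induces a $Q$-basis, then so do $Qx$ and $Q^{2}x$, with the angles $\angle(Qx,Q(Qx))$ and $\angle(Q^{2}x,Q(Q^{2}x))$ both equal to $\varphi$ by \eqref{varphi}. Applying the computation of the previous paragraph verbatim to the vectors $Qx$ and $Q^{2}x$ shows that the 2-planes $\{Qx,Q^{2}x\}$ and $\{Q^{2}x,x\}=\{x,Q^{2}x\}$ are non-degenerate under the same condition $\cos\varphi\neq-\tfrac{1}{3}$, so non-degeneracy of any one of the three 2-planes forces non-degeneracy of the other two.

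There is no real obstacle here; the only mild subtlety is keeping careful track of which powers of $Q$ appear when unfolding $\tilde{g}$ so that the compatibility \eqref{2.12} can be used to reduce every inner product to one of $g(x,x)$, $g(x,Qx)$, or $g(x,Q^{2}x)$, and then invoking \eqref{varphi} uniformly.
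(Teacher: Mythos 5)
Your proposal is correct and follows essentially the same route as the paper: both compute $\tilde{g}(x,x)=\tilde{g}(Qx,Qx)=2g(x,x)\cos\varphi$ and $\tilde{g}(x,Qx)=g(x,x)(1+\cos\varphi)$ via \eqref{metricf}, \eqref{2.12} and \eqref{varphi}, factor the discriminant as $g^{2}(x,x)(\cos\varphi-1)(3\cos\varphi+1)$, and note that the equalities among the $\tilde{g}$-values make all three basic 2-planes degenerate or non-degenerate simultaneously. No gaps.
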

\begin{proof}
The vectors $x$, $Qx$ and $Q^{2}x$ determine 2-planes $\{x, Qx\}$, $\{x, Q^{2}x\}$ and $\{Qx, Q^{2}x\}$.
With the help of \eqref{2.12}, \eqref{metricf} and \eqref{varphi} we calculate
\begin{equation}\label{g-cos}
\begin{split}
  \tilde{g}(x, Qx)&=\tilde{g}(x, Q^{2}x)=\tilde{g}(Qx, Q^{2}x)=g(x, x)(\cos\varphi+1),\\
  \tilde{g}(x, x)&=\tilde{g}(Qx, Qx)=\tilde{g}(Q^{2}x, Q^{2}x)=2g(x, x)\cos\varphi .
  \end{split}
\end{equation}
Then we obtain that $\{x, Qx\}$ is a non-degenerate 2-plane when the inequality
$$\tilde{g}^{2}(x, x)-\tilde{g}^{2}(x, Qx)=(\cos\varphi-1)(3\cos\varphi+1)g^{2}(x,x)\neq 0$$ holds.
Consequently, the 2-plane $\{x, Qx\}$ is non-degenerate if and only if $\varphi\neq\arccos\big(-\frac{1}{3}\big)$.
Due to \eqref{g-cos}, if $\{x, Qx\}$ is a non-degenerate 2-plane, then $\{x, Q^{2}x\}$ and $\{Qx, Q^{2}x\}$ are also non-degenerate 2-planes.
\end{proof}

\begin{theorem}\label{t2.5}
If $(M, \tilde{g}, Q)$ belongs to $\mathcal{L}_{2}$, then the sectional curvatures of the basic 2-planes are
\begin{equation}\label{obmu}
    \tilde{k}(x,Qx)=\tilde{k}(x,Q^{2}x)= \tilde{k}(Qx,Q^{2}x)=-\frac{\tilde{\tau}^{*}(1+\cos\varphi)}{3(1+3\cos\varphi)}-\frac{\tilde{\tau}}{6},
\end{equation}
where $\varphi\neq \arccos\big(-\frac{1}{3}\big)$.
\end{theorem}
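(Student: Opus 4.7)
The strategy is to substitute the explicit form of $\tilde{R}$ from Theorem~\ref{Th-RAE} into the sectional curvature formula \eqref{3.3}, and then simplify using the inner products computed in Lemma~\ref{lem2}. The main work is algebraic.

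First I would assemble the building blocks. Since $Q$ is a $g$-isometry, $g(Qx,Qx)=g(x,x)$ and $g(x,Qx)=g(x,x)\cos\varphi$; combined with \eqref{g-cos} this gives all six mixed products of $x,Qx$ under both $g$ and $\tilde g$ in closed form in terms of $g(x,x)$ and $\cos\varphi$. In particular, the Lorentz denominator in \eqref{3.3} is
\begin{equation*}
\tilde g(x,x)\tilde g(Qx,Qx)-\tilde g^{2}(x,Qx)=-g^{2}(x,x)(1-\cos\varphi)(1+3\cos\varphi),
\end{equation*}
which is the very factor appearing (up to sign) in the non-degeneracy condition of Lemma~\ref{lem2}, so the hypothesis $\varphi\neq\arccos(-\tfrac13)$ is exactly what lets us divide.

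Next I would plug $(x,Qx,x,Qx)$ into the two ``curvature blocks'' from \eqref{pi-pi}. A direct expansion yields
\begin{equation*}
\tilde\pi_{1}(x,Qx,x,Qx)=\tilde g^{2}(x,Qx)-\tilde g(x,x)\tilde g(Qx,Qx)=g^{2}(x,x)(1-\cos\varphi)(1+3\cos\varphi),
\end{equation*}
i.e.\ minus the denominator, and
\begin{equation*}
\tilde\pi_{2}(x,Qx,x,Qx)=2g(x,Qx)\tilde g(x,Qx)-2g(x,x)\tilde g(x,x)=-2g^{2}(x,x)\cos\varphi\,(1-\cos\varphi).
\end{equation*}
Substituting into \eqref{fR-tilde} and dividing by the denominator, the first term contributes $-(\tilde\tau^{*}/3+\tilde\tau/6)$ and the second contributes $2\tilde\tau^{*}\cos\varphi/[3(1+3\cos\varphi)]$. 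Collecting over the common denominator $3(1+3\cos\varphi)$ produces the numerator $-\tilde\tau^{*}(1+\cos\varphi)$, giving the claimed value for $\tilde k(x,Qx)$.

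Finally, for the equalities $\tilde k(x,Qx)=\tilde k(x,Q^{2}x)=\tilde k(Qx,Q^{2}x)$, I would argue that both sides of the expression \eqref{3.3} are unchanged when $(x,Qx)$ is replaced by $(Qx,Q^{2}x)$ or by $(Q^{2}x,Q^{3}x)=(Q^{2}x,x)$: the $\tilde g$-inner products in Lemma~\ref{lem2} are manifestly symmetric under cyclic application of $Q$ (and the same is true for $g$), while $\tilde R$ is $Q$-invariant by the assumption $(M,\tilde g,Q)\in\mathcal L_{2}$ via \eqref{V2}. Thus the same algebraic computation, with the same numerical answer, applies to all three basic $2$-planes. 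The only obstacle worth naming is keeping track of the sign of the denominator so that the factor $(1-\cos\varphi)$ cancels cleanly; otherwise the calculation is purely mechanical.
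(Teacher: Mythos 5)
Your proposal is correct and follows essentially the same route as the paper: the paper likewise evaluates $\tilde{R}(x,Qx,x,Qx)$ by substituting \eqref{fR-tilde}--\eqref{pi-pi} together with the products \eqref{g-cos}, obtaining exactly your expression $\frac{\tilde{\tau}^{*}}{3}(1-\cos^{2}\varphi)g^{2}(x,x)+\frac{\tilde{\tau}}{6}(1-\cos\varphi)(1+3\cos\varphi)g^{2}(x,x)$, and then divides by the denominator from Lemma~\ref{lem2}. The only cosmetic difference is that the paper justifies the equality of the three curvature values via the component conditions \eqref{r1-r6} rather than directly from \eqref{V2} and the $Q$-symmetry of the inner products, which amounts to the same thing.
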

\begin{proof}
Since $(M, \tilde{g}, Q)$ is in $\mathcal{L}_{2}$ we have \eqref{r1-r6}, which implies
\begin{equation}\label{R-glob}
\tilde{R}(x,Qx,x,Qx)=\tilde{R}(x,Q^{2}x,x,Q^{2}x)=\tilde{R}(Qx,Q^{2}x,Qx,Q^{2}x).
\end{equation}
On the other hand, taking into account \eqref{fR-tilde}, \eqref{pi-pi} and \eqref{g-cos}, we find
\begin{equation}\label{Rxqx}
\begin{split}
  \tilde{R}(x,Qx,x,Qx)=&\frac{\tilde{\tau}^{*}}{3}(1-\cos^{2}\varphi)g^{2}(x,x)\\+&\frac{\tilde{\tau}}{6}(1-\cos\varphi)(1+3\cos\varphi)g^{2}(x,x).
  \end{split}
\end{equation}
We apply Lemma~\ref{lem2} and the equalities \eqref{g-cos}, \eqref{R-glob}, \eqref{Rxqx} in \eqref{3.3} and we get \eqref{obmu}.
\end{proof}
\begin{corollary}
If a vector $x$ induces an orthonormal $Q$-basis, then
\begin{equation*}
  \tilde{k}(x,Qx)=\tilde{k}(x,Q^{2}x)= \tilde{k}(Qx,Q^{2}x)=-\frac{\tilde{\tau}^{*}}{3}-\frac{\tilde{\tau}}{6}.
\end{equation*}
\end{corollary}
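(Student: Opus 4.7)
The plan is to apply Theorem~\ref{t2.5} to the special case where the angle $\varphi$ takes the specific value corresponding to orthonormality, and check that the non-degeneracy hypothesis is met. First I would unpack what ``orthonormal $Q$-basis'' means: the triple $\{x, Qx, Q^{2}x\}$ is an orthonormal basis of $T_{p}M$ with respect to $g$. In particular, $g(x, Qx) = 0$ and $g(x,x) = 1$, so the angle $\varphi$ between $x$ and $Qx$ with respect to $g$ satisfies $\cos\varphi = 0$, i.e.\ $\varphi = \pi/2$.

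Next I would observe that $\pi/2 \neq \arccos\bigl(-\tfrac{1}{3}\bigr)$, since the latter lies strictly between $\pi/2$ and $2\pi/3$. By Lemma~\ref{lem2} the three basic $2$-planes $\{x,Qx\}$, $\{x,Q^{2}x\}$, $\{Qx,Q^{2}x\}$ are therefore non-degenerate with respect to $\tilde{g}$, so the hypothesis of Theorem~\ref{t2.5} is satisfied and formula \eqref{obmu} applies.

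Finally I would substitute $\cos\varphi = 0$ into \eqref{obmu}, which collapses the first term from $-\dfrac{\tilde{\tau}^{*}(1+\cos\varphi)}{3(1+3\cos\varphi)}$ to $-\dfrac{\tilde{\tau}^{*}}{3}$, while the second term $-\dfrac{\tilde{\tau}}{6}$ is unchanged. This yields the claimed common value
\[
\tilde{k}(x,Qx)=\tilde{k}(x,Q^{2}x)= \tilde{k}(Qx,Q^{2}x)=-\frac{\tilde{\tau}^{*}}{3}-\frac{\tilde{\tau}}{6}.
\]
There is no genuine obstacle here; the statement is an immediate specialization of Theorem~\ref{t2.5}, with the only minor point being the verification that $\varphi = \pi/2$ does not fall into the excluded degenerate value $\arccos(-1/3)$.
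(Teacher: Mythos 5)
Your proposal is correct and matches the paper's (implicit) argument: the corollary is stated without proof precisely because it is the immediate specialization of Theorem~\ref{t2.5} to $\cos\varphi=0$. Your extra check that $\pi/2\neq\arccos\left(-\frac{1}{3}\right)$, so the non-degeneracy hypothesis holds, is a sensible small addition but does not change the route.
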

 The following statement is inspired by results for curvatures of degenerate 2-planes obtained in \cite{d-n}.
\begin{proposition}
Let $(M, \tilde{g}, Q)$ belong to $\mathcal{L}_{2}$ and let $\{x,Qx\}$ be a degenerate 2-plane. Then $\tilde{R}(x, Qx, x, Qx)$ vanishes if and only if $(M, \tilde{g}, Q)$ is an Einstein manifold.
\end{proposition}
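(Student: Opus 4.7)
The plan is to reduce the statement to a direct computation using formula \eqref{Rxqx} from the proof of Theorem~\ref{t2.5}, together with the characterization of Einstein metrics in Corollary~\ref{EM}. The key observation is that the condition ``$\{x,Qx\}$ is degenerate'' is exactly $\cos\varphi=-\tfrac{1}{3}$, and this value is engineered to kill the $\tilde{\tau}$-contribution in the expression for $\tilde{R}(x,Qx,x,Qx)$.

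First, I would invoke Lemma~\ref{lem2}: since $\{x,Qx\}$ is degenerate with respect to $\tilde{g}$, we must have $\varphi=\arccos(-\tfrac{1}{3})$, i.e.\ $\cos\varphi=-\tfrac{1}{3}$. Next, because $(M,\tilde{g},Q)\in\mathcal{L}_{2}$, Theorem~\ref{Th-RAE} applies and the formula \eqref{Rxqx} is available (it was derived from \eqref{fR-tilde}, \eqref{pi-pi} and \eqref{g-cos} without any non-degeneracy assumption, so it remains valid here):
\begin{equation*}
\tilde{R}(x,Qx,x,Qx)=\frac{\tilde{\tau}^{*}}{3}(1-\cos^{2}\varphi)g^{2}(x,x)+\frac{\tilde{\tau}}{6}(1-\cos\varphi)(1+3\cos\varphi)g^{2}(x,x).
\end{equation*}

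Substituting $\cos\varphi=-\tfrac{1}{3}$ makes the factor $(1+3\cos\varphi)$ vanish, so the $\tilde{\tau}$-term disappears entirely and the expression collapses to
\begin{equation*}
\tilde{R}(x,Qx,x,Qx)=\frac{8\tilde{\tau}^{*}}{27}\,g^{2}(x,x).
\end{equation*}
Since $g$ is positive definite and $x$ induces a $Q$-basis (in particular $x\neq 0$), we have $g(x,x)>0$. Therefore $\tilde{R}(x,Qx,x,Qx)=0$ if and only if $\tilde{\tau}^{*}=0$.

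Finally, I would close by applying Corollary~\ref{EM}: for a manifold $(M,\tilde{g},Q)\in\mathcal{L}_{2}$, the vanishing of $\tilde{\tau}^{*}$ is equivalent to $(M,\tilde{g},Q)$ being Einstein. This yields the claimed equivalence. There is really no substantive obstacle here; the only mild subtlety is to notice that the combinatorial coincidence $\cos\varphi=-\tfrac{1}{3}$ for degeneracy of the 2-plane is precisely what eliminates the $\tilde{\tau}$-term in \eqref{Rxqx}, so the question reduces cleanly to the scalar condition $\tilde{\tau}^{*}=0$.
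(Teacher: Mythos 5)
Your proposal is correct and follows essentially the same route as the paper's own proof: invoke Lemma~\ref{lem2} to get $\cos\varphi=-\tfrac{1}{3}$, substitute into \eqref{Rxqx} to obtain $\tilde{R}(x,Qx,x,Qx)=\tfrac{8\tilde{\tau}^{*}}{27}g^{2}(x,x)$, and conclude via Corollary~\ref{EM}. Your explicit remark that $g(x,x)>0$ is a small but welcome addition the paper leaves implicit.
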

\begin{proof}
Let $\{x, Qx\}$ be a degenerate 2-plane. From Lemma~\ref{lem2} it follows that $\varphi= \arccos\big(-\frac{1}{3}\big)$. Thus the equality \eqref{Rxqx} takes the form $\tilde{R}(x,Qx,x, Qx)=\dfrac{8\tilde{\tau}^{*}}{27}g^{2}(x,x)$.
 Then $\tilde{R}(x,Qx,x, Qx)=0$ if and only if $\tilde{\tau}^{*}=0$. Due to Corollary~\ref{EM}, $(M, \tilde{g}, Q)$ is an Einstein manifold.
 \end{proof}
 In case that $(M, \tilde{g}, Q)$ is an Einstein manifold we suggest the following definition:
\textit{The sectional curvature of a degenerate $2$-plane $\{x, Qx\}$ is}
\begin{equation}\label{limk}
   \tilde{k}(x,Qx)=\lim_{\varphi\rightarrow\arccos\big(-\frac{1}{3}\big)}\frac{\tilde{R}(x, Qx, x, Qx)}{\tilde{g}(x, x)\tilde{g}(Qx, Qx)-\tilde{g}^{2}(x, Qx)}.
   \end{equation}
Therefore, we establish the following
\begin{theorem}
Let $(M, \tilde{g}, Q)$ be an Einstein manifold. Then the sectional curvature of a 2-plane $\{x, Qx\}$ is a constant and
\begin{equation}\label{AE-k}
  \tilde{k}(x,Qx)=-\frac{\tilde{\tau}}{6}.
\end{equation}
\end{theorem}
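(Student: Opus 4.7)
The plan is to exploit the fact, established in the Remark immediately preceding this theorem, that every Einstein manifold $(M, \tilde{g}, Q)$ lies in $\mathcal{L}_{2}$. This lets me invoke Theorem~\ref{t2.5} and Corollary~\ref{EM} directly, so the proof splits into the non-degenerate and degenerate cases for the 2-plane $\{x, Qx\}$.

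In the non-degenerate case, i.e.\ $\varphi \neq \arccos(-1/3)$, I would simply substitute $\tilde{\tau}^{*}=0$ (which holds by Corollary~\ref{EM}, since $(M,\tilde{g},Q)$ is Einstein) into formula \eqref{obmu} of Theorem~\ref{t2.5}. The $\tilde{\tau}^{*}$-term drops out and only $-\tilde{\tau}/6$ remains. This already shows the sectional curvature is independent of the choice of $x$ (and of $\varphi$) on the open set where $\{x,Qx\}$ is non-degenerate.

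For the degenerate case $\varphi = \arccos(-1/3)$, I would compute the limit in \eqref{limk} directly. With $\tilde{\tau}^{*}=0$, formula \eqref{Rxqx} reduces to
\begin{equation*}
\tilde{R}(x,Qx,x,Qx) = \frac{\tilde{\tau}}{6}(1-\cos\varphi)(1+3\cos\varphi)g^{2}(x,x),
\end{equation*}
while the denominator $\tilde{g}(x,x)\tilde{g}(Qx,Qx)-\tilde{g}^{2}(x,Qx)$, computed in the proof of Lemma~\ref{lem2}, equals $(\cos\varphi-1)(3\cos\varphi+1)g^{2}(x,x)$. The quotient is identically $-\tilde{\tau}/6$ on the non-degenerate locus, so the limit as $\varphi \to \arccos(-1/3)$ is also $-\tilde{\tau}/6$.

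The only mildly subtle point is the degenerate case: one must make sure that the apparent $0/0$ indeterminacy of \eqref{limk} is in fact not indeterminate at all, because the factor $(1+3\cos\varphi)$ in the numerator and the factor $(3\cos\varphi+1)$ in the denominator cancel before taking the limit. Once this cancellation is observed, both cases yield the same constant value \eqref{AE-k}, completing the proof.
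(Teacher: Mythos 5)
Your proposal is correct and matches the paper's own proof essentially step for step: both arguments use the Remark/Corollary~\ref{EM} to get $\tilde{\tau}^{*}=0$, substitute into \eqref{obmu} for the non-degenerate case, and resolve the degenerate case by cancelling the common factor $(1+3\cos\varphi)$ in the limit \eqref{limk}. The only difference is the order in which the two cases are treated, which is immaterial.
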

\begin{proof}
Due to Corollary~\ref{EM} we have $\tilde{\tau}^{*}=0$. Hence \eqref{Rxqx} becomes
\begin{equation*}
  \tilde{R}(x,Qx,x,Qx)=\frac{\tilde{\tau}}{6}(1-\cos\varphi)(1+3\cos\varphi)g^{2}(x,x).
\end{equation*}
If $\varphi= \arccos\big(-\frac{1}{3}\big)$ then from the above equality, \eqref{g-cos} and \eqref{limk} we find
\begin{equation*}
\tilde{k}(x,Qx)=\lim_{\varphi\rightarrow\arccos\big(-\frac{1}{3}\big)}\frac{-\tilde{\tau}(1+3\cos\varphi)}{6(1+3\cos\varphi)}=-\frac{\tilde{\tau}}{6}.
\end{equation*}
 If $\varphi\neq \arccos\big(-\frac{1}{3}\big)$ then \eqref{obmu} holds. Hereof the equality
  $\tilde{\tau}^{*}=0$ implies \eqref{AE-k}, which completes the proof.
\end{proof}
With the help of Theorem~\ref{prop-l1} and  Theorem~\ref{t2.5} we state the following
\begin{corollary}\label{muzav1}
If $(M, \tilde{g}, Q)$ belongs to $\mathcal{L}_{1}$, then the sectional curvature of a 2-plane $\{x, Qx\}$ is
\begin{equation*}
    \tilde{k}(x,Qx)=\frac{\tilde{\tau}(1-\cos\varphi)}{6(1+3\cos\varphi)},\end{equation*}
where $\varphi\neq \arccos\big(-\frac{1}{3}\big).$
\end{corollary}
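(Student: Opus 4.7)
The plan is simply to chain together Theorem~\ref{t2.5} and Theorem~\ref{prop-l1}, since the corollary is a substitution-and-simplify consequence of these two results. Because the inclusion $\mathcal{L}_1\subset\mathcal{L}_2$ holds, the hypothesis $(M,\tilde{g},Q)\in\mathcal{L}_1$ immediately places the manifold in $\mathcal{L}_2$, so Theorem~\ref{t2.5} is available and yields
\[
\tilde{k}(x,Qx)=-\frac{\tilde{\tau}^{*}(1+\cos\varphi)}{3(1+3\cos\varphi)}-\frac{\tilde{\tau}}{6},
\]
under the standing assumption $\varphi\neq\arccos\!\big(-\tfrac{1}{3}\big)$ that also appears in the statement of the corollary.

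Next I would apply Theorem~\ref{prop-l1}, which among manifolds already in $\mathcal{L}_2$ characterizes those lying in $\mathcal{L}_1$ by the scalar relation $\tilde{\tau}^{*}=-\tilde{\tau}$. Plugging this equality into the displayed formula above turns the first term into $\tilde{\tau}(1+\cos\varphi)/\bigl(3(1+3\cos\varphi)\bigr)$, so that
\[
\tilde{k}(x,Qx)=\frac{\tilde{\tau}(1+\cos\varphi)}{3(1+3\cos\varphi)}-\frac{\tilde{\tau}}{6}.
\]
Finally, I would bring the two terms over the common denominator $6(1+3\cos\varphi)$ and use the identity $2(1+\cos\varphi)-(1+3\cos\varphi)=1-\cos\varphi$ to arrive at the announced expression.

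I do not expect any real obstacle: the argument is a short algebraic manipulation on top of two previously established results. The only point that deserves a brief remark is that the exclusion $\varphi\ne\arccos\!\big(-\tfrac{1}{3}\big)$ is inherited from Lemma~\ref{lem2} and Theorem~\ref{t2.5}, ensuring that the denominator $1+3\cos\varphi$ is nonzero and that the sectional curvature formula \eqref{3.3} is applicable to the $2$-plane $\{x,Qx\}$.
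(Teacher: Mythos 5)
Your proposal is correct and follows exactly the route the paper intends: the corollary is stated as a direct consequence of Theorem~\ref{prop-l1} and Theorem~\ref{t2.5}, and your chain (use $\mathcal{L}_{1}\subset\mathcal{L}_{2}$ to invoke Theorem~\ref{t2.5}, substitute $\tilde{\tau}^{*}=-\tilde{\tau}$ from Theorem~\ref{prop-l1}, and simplify over the common denominator) is precisely that derivation, with the algebra checking out.
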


Since $\tilde{g}$ is an indefinite metric it admits isotropic vectors. In \cite{dzhe}, it is established that $x$ is an isotropic vector with respect to $\tilde{g}$ if and only if $\varphi=\frac{\pi}{2}$. Also, if $x$ is an isotropic vector, then $Qx$ and $Q^{2}x$ are isotropic vectors, too.

The Ricci curvature in the direction of a non-isotropic vector $x$ is the value \begin{equation}\label{Ricicurv}
    \tilde{r}(x)=\frac{\tilde{\rho}(x,x)}{\tilde{g}(x,x)}.
\end{equation}
\begin{theorem}\label{ric-l2}
If $(M, \tilde{g}, Q)$ belongs to $\mathcal{L}_{2}$ and $x$ is a non-isotropic vector, then the Ricci curvatures are
\begin{equation}\label{ricc}
     \tilde{r}(x)=\tilde{r}(Qx)=\tilde{r}(Q^{2}x)=\frac{\tilde{\tau}^{*}}{6\cos\varphi}+\big(\frac{\tilde{\tau}^{*}}{6}+\frac{\tilde{\tau}}{3}\big).
\end{equation}
\end{theorem}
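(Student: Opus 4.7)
The plan is to use Theorem~\ref{Th-AE}, which expresses $\tilde{\rho}$ linearly in terms of $g$ and $\tilde{g}$ whenever $(M,\tilde{g},Q)\in\mathcal{L}_{2}$, and then simply divide by $\tilde{g}(x,x)$ using the explicit relation between $g(x,x)$ and $\tilde{g}(x,x)$ that was computed in the proof of Lemma~\ref{lem2}.

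First, I would note that since $x$ is non-isotropic we have $\tilde{g}(x,x)\neq 0$; from the identity $\tilde{g}(x,x)=2g(x,x)\cos\varphi$ in \eqref{g-cos} this is equivalent to $\cos\varphi\neq 0$, so the right-hand side of \eqref{ricc} is well-defined. Setting $y=x$ in \eqref{rho51-tilde} gives
\begin{equation*}
\tilde{\rho}(x,x)=\frac{\tilde{\tau}^{*}}{3}\,g(x,x)+\Big(\frac{\tilde{\tau}}{3}+\frac{\tilde{\tau}^{*}}{6}\Big)\tilde{g}(x,x).
\end{equation*}
Dividing by $\tilde{g}(x,x)=2g(x,x)\cos\varphi$ and applying the definition \eqref{Ricicurv}, the first summand contributes $\tilde{\tau}^{*}/(6\cos\varphi)$ while the second stays as is, yielding precisely $\tilde{r}(x)=\tilde{\tau}^{*}/(6\cos\varphi)+\tilde{\tau}^{*}/6+\tilde{\tau}/3$.

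To conclude that $\tilde{r}(Qx)=\tilde{r}(Q^{2}x)=\tilde{r}(x)$, I would repeat the same computation with $x$ replaced by $Qx$ and $Q^{2}x$. The compatibility condition \eqref{2.12} gives $g(Qx,Qx)=g(Q^{2}x,Q^{2}x)=g(x,x)$, and the second line of \eqref{g-cos} gives $\tilde{g}(Qx,Qx)=\tilde{g}(Q^{2}x,Q^{2}x)=\tilde{g}(x,x)$. Substituting these equalities into \eqref{rho51-tilde} and \eqref{Ricicurv} shows that all three Ricci curvatures coincide.

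There is no real obstacle here, as the statement is essentially a direct consequence of Theorem~\ref{Th-AE} together with the trigonometric expressions for $g$ and $\tilde{g}$ on a $Q$-basis; the only point requiring a small check is the invariance of both $g$ and $\tilde{g}$ under $Q$ when evaluated on the diagonal, which follows immediately from \eqref{2.12} and \eqref{g-cos}.
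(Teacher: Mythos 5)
Your proposal is correct and follows essentially the same route as the paper: both rest on the expression \eqref{rho51-tilde} for $\tilde{\rho}$, the relation $\tilde{g}(x,x)=2g(x,x)\cos\varphi$ from \eqref{g-cos}, and the definition \eqref{Ricicurv}. The only cosmetic difference is that the paper gets $\tilde{\rho}(x,x)=\tilde{\rho}(Qx,Qx)=\tilde{\rho}(Q^{2}x,Q^{2}x)$ by citing Corollary~\ref{cor-ro}, whereas you deduce the same equalities from the $Q$-invariance of $g$ and $\tilde{g}$ on the diagonal, which is an equally valid shortcut.
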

\begin{proof}
According to Corollary~\ref{cor-ro} the components of $\tilde{\rho}$ satisfy \eqref{system-tilde-rho}, which implies $\tilde{\rho}(x,x)=\tilde{\rho}(Qx,Qx)=\tilde{\rho}(Q^{2}x,Q^{2}x)$. Hence, taking into account \eqref{rho51-tilde}, we find
\begin{equation}\label{rhoX}
  \tilde{\rho}(x,x)=\tilde{\rho}(Qx,Qx)=\tilde{\rho}(Q^{2}x,Q^{2}x)=\frac{\tilde{\tau}^{*}}{3}g(x,x)+\big(\frac{\tilde{\tau}^{*}}{6}+\frac{\tilde{\tau}}{3}\big) \tilde{g}(x,x).
\end{equation}
Applying  \eqref{g-cos} and \eqref{rhoX} in \eqref{Ricicurv}, we obtain \eqref{ricc}.
\end{proof}
\begin{proposition}
Let $(M, \tilde{g}, Q)$ belong to $\mathcal{L}_{2}$ and let $x$ be an isotropic vector. Then $\tilde{\rho}(x,x)$ vanishes if and only if $(M, \tilde{g}, Q)$ is an Einstein manifold.
\end{proposition}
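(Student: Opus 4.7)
The plan is to reduce the statement to a direct substitution in the formula for $\tilde{\rho}(x,x)$ that was derived inside the proof of Theorem~\ref{ric-l2}, combined with Corollary~\ref{EM}. First, recall that by the cited result from \cite{dzhe}, the vector $x$ (inducing a $Q$-basis) is isotropic with respect to $\tilde{g}$ if and only if $\varphi = \pi/2$; equivalently, from the relations \eqref{g-cos}, the condition $\tilde{g}(x,x)=0$ is just the geometric expression of $\cos\varphi=0$.

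Since $(M,\tilde{g},Q)\in\mathcal{L}_{2}$, Theorem~\ref{Th-AE} gives formula \eqref{rho51-tilde} for $\tilde{\rho}$, and its specialization to the pair $(x,x)$ is precisely the identity \eqref{rhoX}:
\begin{equation*}
\tilde{\rho}(x,x)=\frac{\tilde{\tau}^{*}}{3}g(x,x)+\Big(\frac{\tilde{\tau}^{*}}{6}+\frac{\tilde{\tau}}{3}\Big)\tilde{g}(x,x).
\end{equation*}
When $x$ is isotropic, the coefficient of $\tilde{\tau}/3+\tilde{\tau}^{*}/6$ vanishes, leaving the clean reduction $\tilde{\rho}(x,x)=(\tilde{\tau}^{*}/3)\,g(x,x)$.

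Since $g$ is positive definite and $x\neq 0$ (it induces a $Q$-basis), one has $g(x,x)>0$, so $\tilde{\rho}(x,x)=0$ if and only if $\tilde{\tau}^{*}=0$. By Corollary~\ref{EM}, this last condition is exactly equivalent to $(M,\tilde{g},Q)$ being Einstein, completing the equivalence. There is no real obstacle in the argument: the whole content is the observation that the isotropic condition kills the $\tilde{g}$-term in \eqref{rhoX}, so the remaining $g$-term detects $\tilde{\tau}^{*}=0$ and Corollary~\ref{EM} finishes the proof.
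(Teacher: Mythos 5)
Your proposal is correct and follows essentially the same route as the paper: specialize \eqref{rhoX} to an isotropic $x$ so the $\tilde{g}$-term drops out, conclude $\tilde{\rho}(x,x)=\tfrac{\tilde{\tau}^{*}}{3}g(x,x)$, and invoke Corollary~\ref{EM}. Your explicit remark that $g(x,x)>0$ (positive definiteness of $g$ plus $x\neq 0$) is a small but welcome justification that the paper leaves implicit.
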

\begin{proof}
Since $x$ is an isotropic vector it follows that $\varphi=\frac{\pi}{2}$.
 The latter equality, \eqref{g-cos} and \eqref{rhoX} imply $\tilde{\rho}(x,x)=\dfrac{\tilde{\tau}^{*}}{3}g(x,x)$.
 Thus $\tilde{\rho}(x,x)=0$ if and only if $\tilde{\tau}^{*}=0$. Hereof, due to Corollary~\ref{EM}, $(M, \tilde{g}, Q)$ is an Einstein manifold.
 \end{proof}
 In case that $(M, \tilde{g}, Q)$ is an Einstein manifold we suggest the following definition:
 \textit{The Ricci curvature in the direction of an isotropic vector $x$ is}
\begin{equation}\label{limr}
   \tilde{r}(x)=\lim_{\varphi\rightarrow\frac{\pi}{2}}\frac{\tilde{\rho}(x, x)}{\tilde{g}(x, x)}.
   \end{equation}
Therefore, we establish the following
 \begin{theorem}
Let $(M, \tilde{g}, Q)$ be an Einstein manifold. Then the Ricci curvature is a constant and \begin{equation}\label{rho-isotr}
     \tilde{r}(x)=\frac{\tilde{\tau}}{3}.
\end{equation}
\end{theorem}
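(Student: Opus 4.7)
The plan is to split into isotropic and non-isotropic cases and show the two values agree. The starting point is that, by the Remark after Corollary~\ref{EM}, every Einstein $(M, \tilde{g}, Q)$ belongs to $\mathcal{L}_{2}$, and by Corollary~\ref{EM} we have $\tilde{\tau}^{*} = 0$. Hence all identities established for the class $\mathcal{L}_{2}$ — in particular \eqref{rhoX} and \eqref{ricc} — are at our disposal.

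For a non-isotropic $x$, I would apply Theorem~\ref{ric-l2} directly. Substituting $\tilde{\tau}^{*} = 0$ into \eqref{ricc} makes the two terms involving $\tilde{\tau}^{*}$ vanish (in particular, the potential $1/\cos\varphi$ singularity disappears), leaving $\tilde{r}(x) = \tilde{\tau}/3$ independently of $\varphi$, hence independently of the chosen $x$.

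For an isotropic $x$, where $\varphi = \pi/2$ and the naive ratio $\tilde{\rho}(x,x)/\tilde{g}(x,x)$ is a $0/0$ indeterminate, I would unwind the limit convention \eqref{limr}. Setting $\tilde{\tau}^{*} = 0$ in \eqref{rhoX} yields $\tilde{\rho}(x,x) = (\tilde{\tau}/3)\tilde{g}(x,x)$, so the quotient is identically $\tilde{\tau}/3$ on every approach with $\tilde{g}(x,x) \neq 0$ (ensured for $\varphi$ near but not equal to $\pi/2$ by \eqref{g-cos}). The limit in \eqref{limr} therefore also equals $\tilde{\tau}/3$.

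Since both cases yield the same expression, $\tilde{r}(x) = \tilde{\tau}/3$ for every direction, which is the asserted constancy of the Ricci curvature. No step is computationally heavy; the only genuinely subtle point is to recognize that one must invoke the limit convention \eqref{limr} in the isotropic case, at which point the vanishing of $\tilde{\tau}^{*}$ removes all indeterminacy and the two cases merge into a single value.
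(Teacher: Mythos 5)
Your proposal is correct and follows essentially the same route as the paper: invoke Corollary~\ref{EM} (via the remark that Einstein implies $\mathcal{L}_{2}$) to get $\tilde{\tau}^{*}=0$, substitute into \eqref{ricc} for the non-isotropic case, and use \eqref{rhoX} together with $\tilde{g}(x,x)=2g(x,x)\cos\varphi$ from \eqref{g-cos} to resolve the limit \eqref{limr} in the isotropic case. The only difference is the order of the two cases, which is immaterial.
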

\begin{proof}
Due to Corollary~\ref{EM} we have $\tilde{\tau}^{*}=0$, and the equality \eqref{rhoX} becomes
\begin{equation}\label{roxx}
\tilde{\rho}(x,x)=\frac{\tilde{\tau}}{3}\tilde{g}(x,x).
\end{equation}
Let $x$ be an isotropic vector, i.e.,  $\varphi=\frac{\pi}{2}$. Therefore, using \eqref{g-cos}, \eqref{limr} and \eqref{roxx}, we calculate
\begin{equation*}
\tilde{r}(x)=\lim_{\varphi\rightarrow\frac{\pi}{2}}\frac{\tilde{\tau}\cos\varphi}{3\cos\varphi}=\frac{\tilde{\tau}}{3}.
\end{equation*}
Let $x$ be a non-isotropic vector. Substituting $\tilde{\tau}^{*}=0$ into \eqref{ricc} we get \eqref{rho-isotr},
which completes the proof.
\end{proof}

Having in mind Theorem~\ref{prop-l1} and Theorem~\ref{ric-l2} we state the following
\begin{corollary}
If $(M, \tilde{g}, Q)$ belongs to $\mathcal{L}_{1}$ and $x$ is a non-isotropic vector, then the Ricci curvature is
\begin{equation*}
     \tilde{r}(x)=\frac{\tilde{\tau}}{6}\big(1-\frac{1}{\cos\varphi}\big).
\end{equation*}
\end{corollary}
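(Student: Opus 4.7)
My plan is to obtain the formula by specializing the $\mathcal{L}_2$-result of Theorem~\ref{ric-l2} using the scalar-curvature constraint that characterizes the subclass $\mathcal{L}_1$. Since the chain of inclusions $\mathcal{L}_0\subset\mathcal{L}_1\subset\mathcal{L}_2$ holds, the hypothesis $(M,\tilde{g},Q)\in\mathcal{L}_1$ immediately makes all of the $\mathcal{L}_2$-machinery available.

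First I would invoke Theorem~\ref{ric-l2}: because $(M,\tilde{g},Q)\in\mathcal{L}_{2}$ and $x$ is assumed non-isotropic, the Ricci curvature in the direction of $x$ is
\begin{equation*}
\tilde{r}(x)=\frac{\tilde{\tau}^{*}}{6\cos\varphi}+\Big(\frac{\tilde{\tau}^{*}}{6}+\frac{\tilde{\tau}}{3}\Big).
\end{equation*}
Next, I would apply Theorem~\ref{prop-l1}, which says precisely that within $\mathcal{L}_{2}$ membership in $\mathcal{L}_{1}$ is equivalent to $\tilde{\tau}^{*}=-\tilde{\tau}$. Substituting $\tilde{\tau}^{*}=-\tilde{\tau}$ into the displayed expression and simplifying the constant term as $-\tilde{\tau}/6+\tilde{\tau}/3=\tilde{\tau}/6$ yields
\begin{equation*}
\tilde{r}(x)=-\frac{\tilde{\tau}}{6\cos\varphi}+\frac{\tilde{\tau}}{6}=\frac{\tilde{\tau}}{6}\Big(1-\frac{1}{\cos\varphi}\Big),
\end{equation*}
which is the claimed formula.

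There is no real obstacle here: the corollary is a one-line consequence of two already-established results, and the only thing to check is the algebraic simplification after the substitution. The only side remark worth adding is that the non-isotropy hypothesis on $x$ ensures $\cos\varphi\neq 0$ (by the criterion recalled from \cite{dzhe}), so the expression $1/\cos\varphi$ is well-defined.
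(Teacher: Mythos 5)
Your argument is correct and coincides with the paper's: the corollary is stated there as an immediate consequence of Theorem~\ref{prop-l1} and Theorem~\ref{ric-l2}, i.e.\ by substituting $\tilde{\tau}^{*}=-\tilde{\tau}$ into \eqref{ricc}, exactly as you do. Your added remark that $\mathcal{L}_{1}\subset\mathcal{L}_{2}$ licenses the use of the $\mathcal{L}_{2}$ results, and that non-isotropy guarantees $\cos\varphi\neq 0$, is a sensible (if implicit in the paper) clarification.
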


\section{Lie groups as manifolds of the considered type}\label{Sec:5}

Let $G$ be a $3$-dimensional real connected Lie group and $\mathfrak{g}$ be its Lie algebra with a basis $\{x_{1}, x_{2},x_{3}\}$ of left invariant vector fields.  The manifold $(G, g, Q)$ equipped with a circulant structure $Q$ and a Riemannian metric $g$, determined by
\begin{equation}\label{lie}
  Qx_{1}=x_{2},\ Qx_{2}=x_{3},\ Qx_{3}=x_{1},
\end{equation}
  \begin{equation}\label{g}
 g(x_{i}, x_{j})= \left\{ \begin{array}{ll}
                        0, & i\neq j \hbox{;} \\
                        1, & i=j \hbox{,}
                      \end{array}
                    \right.
  \end{equation}
 is a manifold of the same type as $(M, g, Q)$ (\cite{fil-dokuzova}).

For the associated metric $\tilde{g}$, using \eqref{metricf}, \eqref{lie} and \eqref{g}, we get
\begin{equation}\label{f}
 \tilde{g}(x_{i}, x_{j})= \left\{ \begin{array}{ll}
                        1, & i\neq j \hbox{;} \\
                        0, & i=j \hbox{.}
                      \end{array}
                    \right.
  \end{equation}
Obviously $(G, \tilde{g}, Q)$ is a manifold of the same type as $(M, \tilde{g}, Q)$.

Let $G'$ be a subgroup of $G$, and $(G', g, Q)$ be a manifold with the curvature tensor which is invariant under $Q$. According to \cite{fil-dokuzova}, we have three classes of Lie algebras $\mathfrak{g}$ satisfying this condition. We consider two of them whose Lie brackets are determined as follows:
 \begin{equation}\label{skobki-ex1}
\begin{split}
  [x_{1}, x_{2}]=&\lambda_{1}x_{1}+\lambda_{2}x_{2},\quad
  [x_{2}, x_{3}]=\lambda_{3}x_{2}-\lambda_{1}x_{3},\\
   [x_{1}, x_{3}]=&\lambda_{3}x_{1}+\lambda_{2}x_{3}.
  \end{split}
\end{equation}
\begin{equation}\label{skobki-ex2}
  [x_{1}, x_{2}]=[x_{2}, x_{3}]=-[x_{1}, x_{3}]=\lambda_{1}x_{1}+\lambda_{2}x_{2}-(\lambda_{1}+\lambda_{2})x_{3}.
\end{equation}

\subsection{Einstein manifolds}

Let $(G', g, Q)$ be a manifold with Lie algebra determined by \eqref{skobki-ex1}. In this case $(G', g, Q)$ belongs to $\mathcal{L}_{2}$ and it is an Einstein manifold (\cite{fil-dokuzova}). Further, we study the associated manifold $(G', \tilde{g}, Q)$.

The well-known Koszul formula implies
\begin{equation}\label{kosh}
    2\tilde{g}(\tilde{\nabla}_{x_{i}}x_{j}, x_{k})=\tilde{g}([x_{i}, x_{j}],x_{k})+\tilde{g}([x_{k}, x_{i}],x_{j})+\tilde{g}([x_{k}, x_{j}],x_{i}).
\end{equation}
Now, using \eqref{f}, \eqref{skobki-ex1} and \eqref{kosh}, we calculate
\begin{align}\label{nabla}\nonumber
    \tilde{\nabla}_{x_{1}}x_{1}&=-\lambda_{2}x_{1},\qquad  \tilde{\nabla}_{x_{2}}x_{2}=\lambda_{1}x_{2},\qquad\tilde{\nabla}_{x_{3}}x_{3}=\lambda_{3}x_{3},\\\nonumber
    2\tilde{\nabla}_{x_{1}}x_{2}&=(\lambda_{1}-\lambda_{2}-\lambda_{3})x_{1}+(\lambda_{1}+\lambda_{2}-\lambda_{3})x_{2}-(\lambda_{1}-\lambda_{2}-\lambda_{3})x_{3},\\\nonumber
   2\tilde{\nabla}_{x_{1}}x_{3}&=-(\lambda_{1}+\lambda_{2}-\lambda_{3})x_{1}+(\lambda_{1}+\lambda_{2}-\lambda_{3})x_{2}-(\lambda_{1}-\lambda_{2}-\lambda_{3})x_{3},\\\nonumber
    2\tilde{\nabla}_{x_{2}}x_{1}&=-(\lambda_{1}+\lambda_{2}+\lambda_{3})x_{1}+(\lambda_{1}-\lambda_{2}-\lambda_{3})x_{2}-(\lambda_{1}-\lambda_{2}-\lambda_{3})x_{3},\\\nonumber
    2\tilde{\nabla}_{x_{2}}x_{3}&=-(\lambda_{1}+\lambda_{2}+\lambda_{3})x_{1}+(\lambda_{1}+\lambda_{2}+\lambda_{3})x_{2}-(\lambda_{1}-\lambda_{2}-\lambda_{3})x_{3},\\\nonumber
    2\tilde{\nabla}_{x_{3}}x_{1}&=-(\lambda_{1}+\lambda_{2}+\lambda_{3})x_{1}+(\lambda_{1}+\lambda_{2}-\lambda_{3})x_{2}-(\lambda_{1}+\lambda_{2}-\lambda_{3})x_{3},\\\nonumber
    2\tilde{\nabla}_{x_{3}}x_{2}&=-(\lambda_{1}+\lambda_{2}+\lambda_{3})x_{1}+(\lambda_{1}+\lambda_{2}-\lambda_{3})x_{2}+(\lambda_{1}+\lambda_{2}+\lambda_{3})x_{3}.
\end{align}
Then, from \eqref{R-def}, \eqref{lie} and \eqref{f}, we obtain all nonzero components of $\tilde{R}$ on
$(G',\tilde{g}, Q)$:
\begin{equation}\label{R-ex1-tilde}
\begin{split}
    \tilde{R}_{1212}=& \tilde{R}_{2323}=\tilde{R}_{1313}=\tilde{R}_{2321}=\tilde{R}_{1213}=\tilde{R}_{1323}\\=&\frac{1}{2}(\lambda^{2}_{1}+\lambda^{2}_{2}+\lambda^{2}_{3})+\lambda_{1}\lambda_{2}+\lambda_{2}\lambda_{3}-\lambda_{1}\lambda_{3}.
     \end{split}
\end{equation}
The latter equalities imply that conditions \eqref{r1-r6} are satisfied, but \eqref{r1=r2} are not satisfied.

Further, from \eqref{def-rho2}, \eqref{g}, \eqref{f} and \eqref{R-ex1-tilde}, we find all nonzero components of $\tilde{\rho}$ and the scalar curvatures of $(G',\tilde{g}, Q)$:
\begin{equation}\label{ro-ex1-tilde}
\begin{split}
    \tilde{ \rho}_{12}= \tilde{\rho}_{13}=\tilde{\rho}_{23}=(\lambda^{2}_{1}+\lambda^{2}_{2}+\lambda^{2}_{3})+2\lambda_{1}\lambda_{2}+2\lambda_{2}\lambda_{3}-2\lambda_{1}\lambda_{3},
     \end{split}
\end{equation}
\begin{equation}\label{tau-ex1-tilde}
    \tilde{\tau}=3(\lambda^{2}_{1}+\lambda^{2}_{2}+\lambda^{2}_{3})+6\lambda_{1}\lambda_{2}+6\lambda_{2}\lambda_{3}-6\lambda_{1}\lambda_{3},\qquad \tilde{\tau}^{*}=0.
\end{equation}
Moreover, taking into account \eqref{f}, \eqref{ro-ex1-tilde} and \eqref{tau-ex1-tilde}, it follows that
$\tilde{\rho}=\dfrac{\tilde{\tau}}{3}\tilde{g},$
 i.e. $(G', \tilde{g}, Q)$ is an Einstein manifold.

We apply \eqref{f} and \eqref{R-ex1-tilde} in \eqref{3.3} and we state that the sectional curvatures $\tilde{k}_{ij}$ of the basic $2$-planes $\{x_{i},  x_{j}\}$ are equal to
\begin{equation}\label{kappa1-tilde}
  \tilde{k}=-\frac{1}{2}(\lambda^{2}_{1}+\lambda^{2}_{2}+\lambda^{2}_{3})-\lambda_{1}\lambda_{2}-\lambda_{2}\lambda_{3}+\lambda_{1}\lambda_{3}.
\end{equation}
Therefore, we establish the truthfulness of the following
\begin{proposition}
 Let $(G', \tilde{g}, Q)$ be a manifold with Lie algebra determined by \eqref{skobki-ex1}. Then the following properties hold:
\begin{itemize}
\item[(i)]  $(G', \tilde{g}, Q)$ belongs to $\mathcal{L}_{2}$ but $(G', \tilde{g}, Q)$ does not belong to $\mathcal{L}_{1}$;
\item[(ii)]  the nonzero components of $\tilde{R}$ and $\tilde{\rho}$ are \eqref{R-ex1-tilde} and \eqref{ro-ex1-tilde}, respectively;
 \item[(iii)]  $(G', \tilde{g}, Q)$ is an Einstein manifold and its scalar curvatures are \eqref{tau-ex1-tilde};
\item[(iv)] $(G', \tilde{g}, Q)$ is of constant sectional curvatures \eqref{kappa1-tilde}.
\end{itemize}
\end{proposition}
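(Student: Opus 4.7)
The plan is to read off all four conclusions from the explicit data that has already been assembled in the text preceding the statement, then check each assertion against the corresponding characterization from Section~\ref{Sec:2} or Section~\ref{Sec:3}. The computational scaffolding is: the Koszul identity \eqref{kosh} together with the brackets \eqref{skobki-ex1} and the component values \eqref{f} yields the covariant derivatives $\tilde{\nabla}_{x_i}x_j$; plugging these into \eqref{R-def} gives the components $\tilde{R}_{ijkh}$ collected in \eqref{R-ex1-tilde}; and \eqref{def-rho2} together with the inverse of $(\tilde{g}_{ij})$ then gives \eqref{ro-ex1-tilde} and \eqref{tau-ex1-tilde}.

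For (i), I would compare \eqref{R-ex1-tilde} against the two criteria from Section~\ref{Sec:2}. All six components coincide in absolute value with signs $\tilde{R}_{1212}=\tilde{R}_{1313}=\tilde{R}_{2323}$ and $\tilde{R}_{1213}=\tilde{R}_{1323}=-\tilde{R}_{1223}$, which is exactly \eqref{r1-r6}, so Proposition~\ref{1} places $(G',\tilde g, Q)$ in $\mathcal{L}_2$. On the other hand the common value is generically nonzero while Proposition~\ref{ae-dok} would force $\tilde{R}_{1212}=-\tilde{R}_{1213}$, i.e.\ the common value would have to be its own negative; this fails for generic $\lambda_i$, so $(G',\tilde g,Q)\notin\mathcal{L}_1$. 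Part (ii) is then just the list of nonzero components already displayed in \eqref{R-ex1-tilde} and \eqref{ro-ex1-tilde}; no separate argument is needed beyond checking that the remaining components vanish by direct computation from \eqref{R-def} and \eqref{def-rho2}.

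For (iii), the quickest route uses Theorem~\ref{Th-AE} and Corollary~\ref{EM}: since $(G',\tilde g,Q)\in\mathcal{L}_2$ by (i), the Ricci tensor is given by \eqref{rho51-tilde}, and \eqref{tau-ex1-tilde} shows $\tilde{\tau}^*=0$, so Corollary~\ref{EM} gives the Einstein property and the identity $\tilde{\rho}=(\tilde\tau/3)\tilde g$. Alternatively one can verify $\tilde{\rho}=(\tilde{\tau}/3)\tilde g$ directly by comparing \eqref{ro-ex1-tilde} with \eqref{f} and \eqref{tau-ex1-tilde}. Finally for (iv), substitute \eqref{R-ex1-tilde} and \eqref{f} into the definition \eqref{3.3} of sectional curvature for each of the three basic $2$-planes $\{x_i,x_j\}$; since $\tilde g(x_i,x_i)\tilde g(x_j,x_j)-\tilde g^2(x_i,x_j)=-1$ in every case and the numerators $\tilde R_{ijij}$ are equal by \eqref{R-ex1-tilde}, the three curvatures coincide and equal the expression \eqref{kappa1-tilde}.

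There is no real conceptual obstacle: the work is entirely mechanical. The only point that deserves mild care is the sign bookkeeping in step~(i)---one must distinguish the $\mathcal{L}_2$ signature \eqref{r1-r6} from the stricter $\mathcal{L}_1$ signature \eqref{r1=r2}, and note that the common value in \eqref{R-ex1-tilde} is generically nonzero (so the two signatures are genuinely different for these Lie algebras). Everything else is a transcription of the computations already carried out before the statement.
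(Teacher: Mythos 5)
Your proposal matches the paper's own argument: the paper proves the proposition exactly by the computations you describe --- the Koszul formula \eqref{kosh} for $\tilde{\nabla}$, then the components \eqref{R-ex1-tilde}, \eqref{ro-ex1-tilde} and \eqref{tau-ex1-tilde}, the identity $\tilde{\rho}=\frac{\tilde{\tau}}{3}\tilde{g}$, and the sectional curvatures \eqref{kappa1-tilde} --- reading off (i)--(iv) via Propositions~\ref{ae-dok} and \ref{1} and formula \eqref{3.3}. Your caveat that the exclusion from $\mathcal{L}_{1}$ holds only for generic $\lambda_{i}$ (the common value in \eqref{R-ex1-tilde} can vanish, e.g.\ for $\lambda_{2}=0$, $\lambda_{1}=\lambda_{3}$) is a point the paper glosses over, but otherwise the two arguments are identical.
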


\subsection{Almost Einstein manifolds}

Let $(G', g, Q)$ be a manifold with Lie algebra determined by \eqref{skobki-ex2}. In this case $(G', g, Q)$ belongs to $\mathcal{L}_{0}$ and it is an almost Einstein manifold (\cite{fil-dokuzova}).

Now, we consider the associated manifold $(G', \tilde{g}, Q)$.
From \eqref{f}, \eqref{skobki-ex2} and \eqref{kosh} we obtain
\begin{equation}\label{nabla}
\begin{array}{ll}
    \tilde{\nabla}_{x_{1}}x_{1}=\lambda_{1}(x_{3}-x_{2}), &
    \tilde{\nabla}_{x_{1}}x_{2}=\lambda_{1}(x_{1}-x_{3}),\\
   \tilde{\nabla}_{x_{1}}x_{3}=\lambda_{1}(x_{2}-x_{1}), &
    \tilde{\nabla}_{x_{2}}x_{1}=\lambda_{2}(x_{3}-x_{2}),\\ \tilde{\nabla}_{x_{2}}x_{2}=\lambda_{2}(x_{1}-x_{3}),&
    \tilde{\nabla}_{x_{2}}x_{3}=\lambda_{2}(x_{2}-x_{1}),\\
    \tilde{\nabla}_{x_{3}}x_{1}=(\lambda_{1}+\lambda_{2})(x_{2}-x_{3}), &
    \nabla_{x_{3}}x_{2}=(\lambda_{1}+\lambda_{2})(x_{3}-x_{1}),\\ \tilde{\nabla}_{x_{3}}x_{3}=(\lambda_{1}+\lambda_{2})(x_{1}-x_{2}).
    \end{array}
\end{equation}
In the well-known formula $(\tilde{\nabla}_{x_{i}}Q)x_{j}=\tilde{\nabla}_{x_{i}}(Qx_{j})-Q\tilde{\nabla}_{x_{i}}x_{j}$
we apply \eqref{lie} and \eqref{nabla}. Thus we find $\tilde{\nabla}Q=0$, i.e., $(G', \tilde{g}, Q)\in\mathcal{L}_{0}$.

By using \eqref{R-def}, \eqref{def-rho2}, \eqref{lie}, \eqref{f} and \eqref{nabla}  we calculate the components of $\tilde{R}$ and $\tilde{\rho}$:
\begin{align}\label{Rlie2}
    \tilde{R}_{1212}=& \tilde{R}_{2323}=\tilde{R}_{1313}=-\tilde{R}_{1213}=-\tilde{R}_{2123}=-\tilde{R}_{1323}\\\nonumber =&-2(\lambda^{2}_{1}+\lambda^{2}_{2}+\lambda_{1}\lambda_{2}),\end{align}
    \begin{align}\label{rho-lie2}
    \tilde{\rho}_{11}= \tilde{\rho}_{22}=\tilde{\rho}_{33}= -2\tilde{\rho}_{12}= -2\tilde{\rho}_{13}=-2\tilde{\rho}_{23}= -4(\lambda^{2}_{1}+\lambda^{2}_{2}+\lambda_{1}\lambda_{2}).
\end{align}
We find the scalar curvatures and the sectional curvatures of $(G', \tilde{g}, Q)$ with the help of \eqref{def-rho2}, \eqref{3.3}, \eqref{g}, \eqref{f}, \eqref{Rlie2} and \eqref{rho-lie2}.
The obtained results we expose in the following
\begin{proposition} Let $(G', \tilde{g}, Q)$ be a manifold with Lie algebra determined by \eqref{skobki-ex2}. Then the following properties hold:
\begin{itemize}
\item[(i)]  $(G', \tilde{g}, Q)$ belongs to $\mathcal{L}_{0}$;
\item[(ii)] the nonzero components of  $\tilde{R}$ and $\tilde{\rho}$ are \eqref{Rlie2} and \eqref{rho-lie2}, respectively;
 \item[(iii)]  $(G', \tilde{g}, Q)$ is an almost Einstein manifold and its scalar curvatures are
    $\tilde{\tau}=-\tilde{\tau}^{*}=12(\lambda^{2}_{1}+\lambda^{2}_{2}+\lambda_{1}\lambda_{2})$;
\item[(iv)] the sectional curvatures of the basic 2-planes $\{x_{i},  x_{j}\}$ are equal to
  $\tilde{k}=2(\lambda^{2}_{1}+\lambda^{2}_{2}+\lambda_{1}\lambda_{2}).$
\end{itemize}
\end{proposition}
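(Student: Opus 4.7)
The plan is to mirror the strategy used in the Einstein case treated in the preceding proposition and push the same program through with the brackets \eqref{skobki-ex2}. The four assertions are linked: once the Levi-Civita connection $\tilde{\nabla}$ and the tensors $\tilde{R}$, $\tilde{\rho}$ are explicitly computed, items (i)--(iv) fall out sequentially. A useful simplifying observation is that every bracket in \eqref{skobki-ex2} is a scalar multiple of the single vector $v=\lambda_{1}x_{1}+\lambda_{2}x_{2}-(\lambda_{1}+\lambda_{2})x_{3}$, so the Koszul expansions reduce to evaluating $\tilde{g}(v,x_{k})$, which takes only the three values $-\lambda_{1}$, $-\lambda_{2}$, $\lambda_{1}+\lambda_{2}$.

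First I would apply the Koszul formula \eqref{kosh} together with \eqref{f} and \eqref{skobki-ex2} to read off the nine covariant derivatives $\tilde{\nabla}_{x_{i}}x_{j}$ exhibited in \eqref{nabla}. To establish (i), I would feed \eqref{lie} and \eqref{nabla} into $(\tilde{\nabla}_{x_{i}}Q)x_{j}=\tilde{\nabla}_{x_{i}}(Qx_{j})-Q\tilde{\nabla}_{x_{i}}x_{j}$ and verify that each of the nine outputs vanishes; for instance, $(\tilde{\nabla}_{x_{1}}Q)x_{1}=\tilde{\nabla}_{x_{1}}x_{2}-Q\tilde{\nabla}_{x_{1}}x_{1}=\lambda_{1}(x_{1}-x_{3})-\lambda_{1}Q(x_{3}-x_{2})=\lambda_{1}(x_{1}-x_{3})-\lambda_{1}(x_{1}-x_{3})=0$, and the remaining eight cases are analogous.

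Next, for (ii), I would substitute \eqref{nabla} and the brackets \eqref{skobki-ex2} into the definition \eqref{R-def}, then lower the final index via \eqref{f} to reach \eqref{Rlie2}; as a built-in consistency check, the resulting components must simultaneously satisfy \eqref{r1=r2} and \eqref{r1-r6}, since $\mathcal{L}_{0}\subset\mathcal{L}_{1}\subset\mathcal{L}_{2}$. Contracting with the inverse $\tilde{g}^{ij}$ then yields \eqref{rho-lie2}. For (iii), the almost Einstein conclusion is free: by (i) and the inclusion $\mathcal{L}_{0}\subset\mathcal{L}_{2}$, Theorem~\ref{Th-AE} applies, and the scalar curvatures follow by contracting \eqref{rho-lie2} with $\tilde{g}^{ij}$ and $g^{ij}$. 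Finally, (iv) drops out of \eqref{3.3} together with $\tilde{g}(x_{i},x_{i})\tilde{g}(x_{j},x_{j})-\tilde{g}^{2}(x_{i},x_{j})=-1$ for $i\ne j$.

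The main obstacle is clerical rather than conceptual: the nine Koszul computations and the extraction of the independent curvature components produce a sizeable but entirely elementary algebraic tableau, with the single substantive step being the cancellations yielding $\tilde{\nabla}Q=0$. Because the brackets \eqref{skobki-ex2} were engineered so that $(G',g,Q)\in\mathcal{L}_{0}$, Proposition~\ref{confequiv} already guarantees $(G',\tilde{g},Q)\in\mathcal{L}_{0}$, so (i) is essentially predestined; any arithmetic slip in the covariant derivatives would nevertheless be immediately flagged by the failure of the symmetry relations required in (ii).
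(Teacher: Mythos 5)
Your proposal is correct and follows essentially the same route as the paper: Koszul formula to get the $\tilde{\nabla}_{x_i}x_j$, direct verification of $\tilde{\nabla}Q=0$ for (i), computation of $\tilde{R}$, $\tilde{\rho}$ and the contractions for (ii)--(iv). The extra touches (the single vector $v$, the consistency check against \eqref{r1=r2} and \eqref{r1-r6}, and invoking Theorem~\ref{Th-AE} for the almost Einstein claim) are harmless refinements; just note that your closing appeal to Proposition~\ref{confequiv} is dispensable, since its proof rests on the coordinate condition \eqref{parallel2} while here the frame is a left-invariant (non-coordinate) one, and your direct computation already settles (i).
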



\end{document}